	\setlist{nosep} \usepackage{color}
\definecolor{darkgreen}{rgb}{0.0, 0.5, 0}
\newcommand*\Let[2]{\State #1 $\gets$ #2}
\newtheorem{remark}{Remark}
\newtheorem{assumption}{Assumption}
\newtheorem{theorem}{Theorem}
\newtheorem{corollary}{Corollary}
\crefname{assumption}{Assumption}{Assumptions}
\DeclareMathOperator{\argmin}{argmin }
\DeclareMathOperator{\cL}{\widehat{\mathcal{L}}}
\newcommand{\TheTitle}{Fast solution of fully implicit Runge-Kutta and discontinuous
	Galerkin in time for numerical PDEs, Part II: nonlinearities and DAEs}
\newcommand{\TheAuthors}{B.S. Southworth, O.A. Krzysik, and W. Pazner}
\title{{\TheTitle}\thanks{BSS was supported by Lawrence Livermore National
      Laboratory under contract B639443, and as a Nicholas C. Metropolis Fellow
      under the Laboratory Directed Research and Development program of Los
      Alamos National Laboratory. OAK acknowledges the support of an Australian
      Government Research Training Program (RTP) Scholarship.
  }}
\author{Ben S. Southworth\thanks{Theoretical Division, Los Alamos National Laboratory,
    U.S.A. (\url{southworth@lanl.gov}),
    \url{http://orcid.org/0000-0002-0283-4928}}
    \and
    Oliver A. Krzysik\thanks{School of Mathematics, Monash University,
  	Australia (\url{oliver.krzysik@monash.edu}),
  	\url{https://orcid.org/0000-0001-7880-6512}}
  	\and
  	Will Pazner\thanks{Center for Applied Scientific Computing,
  	Lawrence Livermore National Laboratory,
    U.S.A. (\url{pazner1@llnl.gov})}
}
\ifpdf\hypersetup{  pdftitle={\TheTitle},
  pdfauthor={\TheAuthors}
}
\begin{document}
\maketitle
\allowdisplaybreaks

\begin{abstract}
Fully implicit Runge-Kutta (IRK) methods have many desirable accuracy and
stability properties as time integration schemes, but high-order IRK methods
are not commonly used in practice with large-scale numerical PDEs because of the 
difficulty of solving the stage equations. This paper introduces a theoretical and algorithmic
framework for solving the nonlinear equations that arise from IRK methods
(and discontinuous Galerkin discretizations in time) applied to nonlinear
numerical PDEs, including PDEs with algebraic constraints.
Several new linearizations of the nonlinear IRK equations are developed,
offering faster and more robust convergence than the often-considered simplified
Newton, as well as an effective preconditioner for the true Jacobian if exact
Newton iterations are desired. Inverting these linearizations requires solving a
set of block $2\times 2$ systems. Under quite general assumptions, it is proven
that the preconditioned $2\times 2$ operator's condition number is bounded by
a small constant close to one, \emph{independent of the spatial discretization},
spatial mesh, and time step, and
with only weak dependence on the number of stages or integration accuracy.
Moreover, the new method is built using the same preconditioners needed
for backward Euler-type time stepping schemes, so can be readily added to
existing codes. The new methods are applied to several
challenging fluid flow problems, including the compressible Euler and Navier
Stokes equations, and the vorticity-streamfunction formulation of the
incompressible Euler and Navier Stokes equations. Up to 10th-order accuracy is
demonstrated using Gauss IRK, while in all cases 4th-order Gauss
IRK requires roughly half the number of preconditioner applications as
required by standard SDIRK methods.
\end{abstract}

\section{Introduction}\label{sec:intro}

\subsection{Fully implicit Runge-Kutta}\label{sec:intro:irk}

Consider the method-of-lines approach to the numerical solution of partial differential
equations (PDEs), where we discretize in space and arrive at a system of ordinary
differential equations (ODEs) in time,
\begin{align}\label{eq:problem}
	M\mathbf{u}'(t) =  \mathcal{N}(\mathbf{u},t) \quad\text{in }(0,T], \quad \mathbf{u}(0) = \mathbf{u}_0,
\end{align}
where $M$ is a mass matrix and $\mathcal{N}:\mathbb{R}^{N} \times \mathbb{R}_+
\mapsto\mathbb{R}^{N}$ is a discrete, time-dependent, nonlinear operator
depending on $t$ and $\mathbf{u}$ (including potential forcing terms). Note,
PDEs with an algebraic constraint, for example, the divergence-free constraint
in Navier Stokes, instead yield a system of differential algebraic equations
(DAEs). DAEs require separate treatment and are addressed in \Cref{sec:dae}.
Now, consider time propagation of \eqref{eq:problem} using an $s$-stage
Runge-Kutta scheme, characterized by the Butcher tableaux
$
\begin{array}
{c|c}
\mathbf{c}_0 & A_0\\
\hline
& \mathbf{b}_0^T
\end{array},$
with Runge-Kutta matrix $A_0 = \{a_{ij}\}\in\mathbb{R}^{s\times s}$,
weight vector $\mathbf{b}_0^T = (b_1, \ldots, b_s)^T$,
and abscissa $\mathbf{c}_0 = (c_1, \ldots, c_s)$.

Runge-Kutta methods update the solution using a sum over stage vectors,
\begin{align}\label{eq:update}
\mathbf{u}_{n+1} & = \mathbf{u}_n + \delta t \sum_{i=1}^s b_i\mathbf{k}_i,
	\hspace{5ex}\textnormal{where}\\
\mathbf{0} & = M\mathbf{k}_i - \mathcal{N}\bigg(\mathbf{u}_n +
	\delta t\sum_{j=1}^s a_{ij}\mathbf{k}_j, t_n+\delta tc_i\bigg).\label{eq:stages}
\end{align}
For nonlinear PDEs, $\mathcal{N}$ is linearized using, for example, a Newton or a
Picard linearization, and each nonlinear iteration then consists of solving the
linearized system of equations. In most cases, such a linearization
is designed to approximate (or equal) the Jacobian of \eqref{eq:stages}. Applying
the chain rule to \eqref{eq:stages} for the partial
$\partial(M\mathbf{k}_i-\mathcal{N}_i)/\partial\mathbf{k}_j$, we see that
the linearized system takes the form
\begin{align}\label{eq:k0}
\left( \begin{bmatrix} M  & & \mathbf{0} \\ & \ddots \\ \mathbf{0} & & M\end{bmatrix}
	- \delta t \begin{bmatrix} a_{11}\mathcal{L}_1 & ... & a_{1s}\mathcal{L}_1 \\
	\vdots & \ddots & \vdots \\ a_{s1}\mathcal{L}_s & ... & a_{ss} \mathcal{L}_s \end{bmatrix} \right)
	\begin{bmatrix} \mathbf{k}_1 \\ \vdots \\ \mathbf{k}_s \end{bmatrix}
& = \begin{bmatrix} \mathbf{f}_1 \\ \vdots \\ \mathbf{f}_s \end{bmatrix},
\end{align}
where $\mathcal{L}_i\in\mathbb{R}^{N\times N}$ denotes a linearization of the
nonlinear function corresponding to the $i$th stage vector, $\mathcal{N}_i:=
\mathcal{N}\left(\mathbf{u}_n + \delta t\sum_{j=1}^s a_{ij}\mathbf{k}_j,
t_n+\delta tc_i\right)$, and $-\mathbf{f}_i$ corresponds to \eqref{eq:stages}
evaluated at the previous nonlinear iterate for $\{\mathbf{k}_i\}$ (i.e., 
$\mathbf{f}_i$ is the negative residual of \eqref{eq:stages} from the
previous iterate). We emphasize
that the spatially linearized operators, $\mathcal{L}_i$, should be fixed for a
given block row of the full linearized system, as in \eqref{eq:k0}.
Moving forward, we let $\mathcal{L}$ refer to a general, spatially linearized
operator when the stage index is not relevant.

The difficulty in fully implicit Runge-Kutta methods (which we will denote IRK)
lies in solving the $Ns\times Ns$ block linear system in \eqref{eq:k0}. This
paper focuses on the parallel simulation of numerical PDEs, where $N$ is
typically very large and $\mathcal{L}$ is highly ill-conditioned. In such cases,
direct solution techniques to solve \eqref{eq:k0} are not a viable option, and
fast, parallel iterative methods must be used. However, IRK methods are rarely
employed in practice due to the difficulties of solving \eqref{eq:k0}. Even for
relatively simple parabolic PDEs where $-\mathcal{L}$ is symmetric positive
definite (SPD), \eqref{eq:k0} is a large nonsymmetric matrix with significant
block coupling. For nonsymmetric matrices $\mathcal{L}$ that already have
inter-variable coupling that arise in systems of PDEs, traditional iterative
methods are even less likely to yield acceptable performance in solving
\eqref{eq:k0}.

\begin{remark}[Discontinuous Galerkin (DG) in time]
For completeness, here we repeat the discussion from the companion paper \cite{irk1}
regarding the relation of DG discretizations in time to IRK
methods. After linearization, DG discretizations in time give rise to linear
algebraic systems of the form
\begin{equation} \label{eq:dg-in-time}
	\left( \begin{bmatrix}
		\delta_{11} M  & & \delta_{1s} M \\
		& \ddots \\
		\delta_{s1} M & & \delta_{ss} M
	\end{bmatrix}
	- \delta t \begin{bmatrix}
		t_{11}\mathcal{L}_1 & ... & t_{1s}\mathcal{L}_1 \\
		\vdots & \ddots & \vdots \\
		t_{s1}\mathcal{L}_s & ... & t_{ss} \mathcal{L}_s
	\end{bmatrix} \right)
		\begin{bmatrix} \mathbf{u}_1 \\ \vdots \\ \mathbf{u}_s \end{bmatrix}
		= \begin{bmatrix} \mathbf{r}_1 \\ \vdots \\ \mathbf{r}_s \end{bmatrix}.
\end{equation}
The coefficients $t_{ij}$ correspond to a temporal mass matrix, the coefficients
$\delta_{ij}$ correspond to a DG weak derivative with upwind numerical flux, and
the unknowns $\mathbf{u}_i$ are the coefficients of the polynomial expansion of
the approximate solution (for example, see \cite{hn,Akrivis2011,Lasaint1974,Makridakis2006}).
Both of the coefficient matrices $\{t_{ij}\},\{\delta_{ij}\}$ are
invertible. It can be seen that the algebraic form of the DG in time
discretization is closely related to the implicit Runge-Kutta system
\eqref{eq:k0} and, in fact, \eqref{eq:dg-in-time} can be recast in the form of
\eqref{eq:k0} using the invertibility of the matrix $\{\delta_{ij}\}$. In
particular, the degree-$p$ DG method using $(p+1)$-point
Radau quadrature, which is exact for polynomials of degree $2p$, is equivalent
to the Radau IIA collocation method \cite{Makridakis2006}, which is used for
many of the numerical results in \Cref{sec:numerics}.
Thus, although the remainder of this paper focuses on fully implicit Runge-Kutta,
the algorithms developed here can also be applied to DG discretizations in time on
fixed slab-based meshes.
\end{remark}

\subsection{Outline}\label{sec:intro:outline}

In \cite{irk1}, robust and effective preconditioning techniques are developed for
the solution of fully implicit Runge Kutta methods and DG discretizations in time
applied to linear numerical PDEs. This paper builds on ideas from \cite{irk1} to
address nonlinearities and DAEs.

First, new ways to approximate \eqref{eq:k0} are introduced in \Cref{sec:nonlinear},
which can be used
as preconditioners for solving \eqref{eq:k0} exactly, or as a modified
linearization. The new approach only requires the solution of a block $2\times
2$ set of equations for each pair of stages, rather than the fully coupled
$s\times s$ system in \eqref{eq:k0}. Moreover, unlike many of the simplified
Newton approaches seen previously in the literature, the new approach can yield
convergence comparable to true Newton iterations (or be used as a very effective
preconditioner of the true Jacobian).

\Cref{sec:theory} then introduces block
preconditioners for the $2\times 2$ systems, where the preconditioned
Schur-complement (which effectively defines convergence of fixed-point
and Krylov iterations applied to the larger $2\times 2$ system
\cite{2x2block}) is proven to have a condition number bounded by a small
order-one constant. The preconditioner is \emph{asymptotically optimal}, that is,
the condition number is bounded independent of mesh spacing and time step, and
has only weak dependence on the order of integration/number of stages.
The theory is quite general, relying on only basic stability assumptions
from \Cref{sec:intro:stab}, and the block preconditioning only requires
an effective preconditioner for systems along the lines of
$\gamma M - \delta t\mathcal{L}$, exactly as would be used, e.g., for
SDIRK methods. {A self-contained algorithm description is provided
in \Cref{sec:alg}.}

Numerical results for several challenging nonlinear fluid flow problems are
provided in \Cref{sec:numerics}. These include the compressible Euler equations,
for which we solve a model isentropic vortex problem, and the compressible
Navier--Stokes equations, for which we consider wall-resolved high Reynolds
number flow over a NACA airfoil. Additionally, we consider two test cases using
the incompressible Euler and Navier--Stokes equations in
vorticity-streamfunction formulation. After spatial discretization, these
equations result in a system of index-1 differential algebraic equations (DAEs),
illustrating the applicability of the IRK linearizations and preconditioners
to systems of equations with algebraic constraints.

{The methods are implemented with the MFEM \cite{Anderson2020} library
and available at \url{https://github.com/bensworth/IRKIntegration}.}

\section{Background}\label{sec:background}

\subsection{Why fully implicit and previous work}\label{sec:intro:hist}

Aside from the difficulty of solving \eqref{eq:k0} rapidly for large,
ill-conditioned $\{\mathcal{L}_i\}$, IRK methods have a number of desirable
properties in practice. For stiff PDEs, the observed accuracy of Runge-Kutta
methods can be limited to $\approx \min\{p, q+1\}$, for integration order $p$
and stage-order $q$ \cite{hairer96,kennedy16}. For index-2 DAEs, the order of
accuracy is formally limited to that of the stage order, $q$ \cite{hairer96}.
Diagonally implicit Runge Kutta (DIRK) methods are most commonly used in
practice for numerical PDEs due to ease of implementation, but DIRK methods have
a maximum order of $p=s$ or $p=s+1$ with reasonable stability properties
\cite[Section IV.6]{hairer96},\cite{kennedy16} and, moreover, are limited
to stage-order $q=1$ \cite{rosales2017spatial}
(or $q=2$ for ESDIRK methods with one explicit stage \cite{kennedy16}). In
contrast, IRK methods can have order as high as $p=2s$ for $s$ stages and
stage-order $q = s$. Advantages of IRK methods (in a discretization sense) for
the system of DAEs that arise in incompressible Navier Stokes can be seen in
\cite{Sanderse.2013}, {where high-order accuracy can be obtained in the
pressure variable without additional projections, splittings, or staggered
grids.} For PDEs where DIRK methods are ineffective, linear
multistep methods, in particular BDF schemes, can offer improved accuracy and
are often used in practice. However, A-stable implicit multistep methods can
have at most order two, and the stability region of higher-order methods moves
progressively farther away from the imaginary axis, which is particularly
problematic for advection-dominated flows. Multistep methods also introduce
their own difficulties in initializing (or restarting after discontinuities)
with high-order accuracy, due to their multistep nature \cite[Chapter
4]{brenan1995numerical}, whereas Runge-Kutta methods naturally start with
high-order accuracy. Furthermore, neither linear multistep nor explicit Runge
Kutta methods can be generally symplectic {(i.e., for non-separable
problems)} \cite{Hairer.2002}. Although DIRK methods can be symplectic, they are
limited to at most 4th order and, moreover, known methods above second order are
impractical due to negative diagonal entries of $A_0$ (leading to a negative
shift rather than positive shift of the spatial discretization)
\cite{kennedy16}. IRK methods are able to satisfy conditions for symplecticty of
arbitrary order, and even moderate-order symplectic integration requires IRK
methods.

It should be noted that IRK methods are by no means new, and many papers have
considered the efficient implementation of IRK integration in various contexts.
Much of the early work was focused on ODEs and minimizing the number of LU
decompositions that must be computed. Most of these works use a simplified
Newton method, where it is assumed that $\mathcal{L}_i = \mathcal{L}_j$ for all
$i,j$, and either consider the solution of the simplified system \eqref{eq:k0} (see,
e.g., \cite{varah79,butcher76,bickart77,houwen97b,jay99}), or introduce/analyze
a modified nonlinear iteration or time stepping scheme (see, e.g.,
\cite{cooper83,pinto95,pinto96,cooper90,hoffmann97,jay00}). Some of the first
works to consider IRK methods for PDEs were the sequence of papers
\cite{mardel07,nissen11,staff06}, which analyze block triangular and diagonal
preconditioners for the (linear) diffusion and biodomain equations in the
Sobolev setting, and demonstrate that the preconditioned operators are
well-conditioned. Other papers have demonstrated success with various IRK
preconditioning strategies for parabolic type problems as well
\cite{vanlent05,chen14,exh,8jp,27n}, with the method in \cite{exh} also
demonstrating success in practice on linear hyperbolic problems. Nevertheless,
very few works have considered the true nonlinear setting for numerical PDEs
(that is, not simplified Newton) and, to our knowledge, no works have provided
analysis of preconditioning \eqref{eq:k0} for non-parabolic problems. This work
addresses both of these issues.

\begin{remark}[Growing interest in IRK]
While writing this paper, at least three preprints
have been posted studying the use of IRK methods for numerical PDEs.
Two papers develop new block preconditioning techniques for parabolic PDEs
\cite{jiao2020optimal,rana2020new} (\cite{jiao2020optimal} also appeals to
the Schur decomposition as used in this paper), and one focuses on
a numerical implementation of IRK methods with the Firedrake package
\cite{farrell2020irksome}.
\end{remark}

\subsection{A preconditioning framework and stability}\label{sec:intro:stab}

Similar to \cite{pazner17,irk1}, methods developed in this paper appeal
to pulling $(A_0\otimes I)$ out of the matrix in \eqref{eq:k0}, yielding
an equivalent problem
\begin{align}\label{eq:keq}
\left( A_0^{-1}\otimes M - \delta t \begin{bmatrix} \mathcal{L}_1  & \\ & \ddots \\ && \mathcal{L}_s\end{bmatrix}\right)
	(A_0\otimes I)	\begin{bmatrix} \mathbf{k}_1 \\ \vdots \\ \mathbf{k}_s \end{bmatrix}
& = \begin{bmatrix} \mathbf{f}_1 \\ \vdots \\ \mathbf{f}_s \end{bmatrix}.
\end{align}
Off-diagonal blocks in the reformulated system \eqref{eq:keq} now consist of
mass matrices, rather than differential operators, which simplifies the
development and analysis of preconditioning, and also reduces the number
of sparse matrix-vector operations with $\{\mathcal{L}_i\}$. Algorithms
developed in this paper rely on the following assumption regarding
eigenvalues of $A_0$ and $A_0^{-1}$:
\begin{assumption}\label{ass:eig}
Assume that all eigenvalues of $A_0$ (and equivalently $A_0^{-1})$ have positive real part.
\end{assumption}
Recall that if an IRK method is A-stable, irreducible, and $A_0$ is invertible
(which includes DIRK, Gauss, Radau IIA, and Lobatto IIIC methods, among others),
then \Cref{ass:eig} holds \cite{hairer96}; that is, \Cref{ass:eig} is
straightforward to satisfy in practice.

The second assumption we make for analysis in this paper is derived from
stability of ODE solvers applied to numerical PDEs using the method-of-lines.
The Dalhquist test problem extends naturally to this setting, where we are
interested in the stability of the linearized operator $\mathcal{L}$, for
the ODE(s) $\mathbf{u}'(t) = \mathcal{L}\mathbf{u}$, with solution
$e^{t\mathcal{L}}\mathbf{u}$. In \cite{reddy92}, necessary and sufficient
conditions for stability are derived as the $\varepsilon$ pseudo-eigenvalues
of $\delta t\mathcal{L}$ being within $\mathcal{O}(\varepsilon) + \mathcal{O}(\delta t)$
of the stability region as $\varepsilon,\delta t\to 0$. Here we relax this
assumption to something that is more tractable to work with by noting that
the $\varepsilon$ pseudo-eigenvalues are contained within the field of values
to $\mathcal{O}(\varepsilon)$ \cite[Eq. (17.9)]{trefethen2005spectra},
where the field of values is defined as
\begin{align}\label{eq:fov}
W(\mathcal{L}) := \left\{ \langle \mathcal{L}\mathbf{x},\mathbf{x}\rangle \text{ : }
	\|\mathbf{x}\| = 1 \right\}.
\end{align}
This motivates the following assumption for the analysis done in this paper:
\begin{assumption}\label{ass:fov}
Let $\mathcal{L}$ be a linearized spatial operator, and assume that $W(\mathcal{L}) \leq 0$
(that is, $W(\mathcal{L})$ is a subset of the closed left half plane).
\end{assumption}
{Note that if $\mathcal{L}$ is normal, then \Cref{ass:fov} is equivalent
to the real parts of the eigenvalues of $\mathcal{L}$ being in the closed left-half plane
since $W(\mathcal{L})$ is the convex hull of the eigenvalues.}

As discussed in \cite{irk1}, note that the field of values has an additional
connection to stability. From \cite[Theorem 17.1]{trefethen2005spectra}, we
have that $\|e^{t\mathcal{L}}\|\leq 1$ for all $t\geq 0$ if and only if
$W(\mathcal{L}) \leq 0$. This is analogous to the ``strong stability'' discussed
by Leveque \cite[Chapter 9.5]{leveque2007finite}, as opposed to the weaker (but
still sufficient) condition $\|e^{t\mathcal{L}}\|\leq C$ for all $t\geq 0$ and
some constant $C$. In practice, \Cref{ass:fov} often holds when
simulating numerical PDEs, and in \Cref{sec:theory} it is proven that
\Cref{ass:eig} and \ref{ass:fov} guarantee the preconditioning methods
proposed here yield a preconditioned Schur complement with a small,
bounded, order-one condition number, within the larger $2\times 2$ systems
discussed in \Cref{sec:intro:outline}.

\section{Nonlinear iterations}\label{sec:nonlinear}

{Let $\lambda_{\pm} := \eta \pm \mathrm{i}\beta$ denote an
eigenvalue (pair) of $A_0^{-1}$, where, under \Cref{ass:eig}, $\eta > 0$.
For ease of notation, in this section and \Cref{sec:theory}, we will scale
both sides of \eqref{eq:keq} by a block diagonal operator, with diagonal
blocks $M^{-1}$, and define $\widehat{\mathcal{L}}_i := \delta t M^{-1}\mathcal{L}_i$,
for $i=1,...,s$. In practice we do \emph{not} directly form $\widehat{\mathcal{L}}$,
as $M^{-1}$ is often a dense matrix. Rather, it is a theoretical tool to
simplify notation; in practice we must apply and precondition standard
time-dependent operators of the form $(\gamma M - \delta t\mathcal{L}_i)$.}

\subsection{Simplified Newton}\label{sec:nonlinear:simp}

Suppose $\mathcal{L}_i = \mathcal{L}_j$ for all $i,j$ (as in a simplified Newton method).
Then, the linear system for stage vectors \eqref{eq:keq} (diagonally scaled by $M^{-1}$)
can be written in condensed Kronecker product notation
\begin{align}\label{eq:keq2}
\left( A_0^{-1}\otimes I - I\otimes\widehat{\mathcal{L}}\right)
	(A_0\otimes I) \mathbf{k} & = (I_s \otimes M^{-1})\mathbf{f}.
\end{align}
Now, let $A_0^{-1} = Q_0R_0Q_0^T$ be the real Schur decomposition of $A_0^{-1}$, where
$Q_0$ is real-valued and orthogonal, and $R_0$ is a block
upper triangular matrix, where each block corresponds to an eigenvalue (pair) of
$A_0^{-1}$. Real-valued eigenvalues have block size one, and complex eigenvalues
$\eta\pm \mathrm{i} \beta$ are in $2\times 2$ blocks,
$\begin{bmatrix} \eta & \phi \\-\beta^2/\phi & \eta\end{bmatrix}$, for some
constant $\phi$.
Pulling out a $Q_0\otimes I$ and $Q_0^T\otimes I$ from the left and right of
\eqref{eq:keq2} yields the equivalent linear system
\begin{align}\label{eq:keq3}
\left( R_0\otimes I - I \otimes \widehat{\mathcal{L}}\right)
	(R_0^{-1}Q_0^T\otimes I) \mathbf{k} & = (Q_0^T\otimes I)(I_s \otimes M^{-1})\mathbf{f}.
\end{align}
The left-most matrix is now block upper triangular, which can be solved
using block backward substitution, and requires inverting each diagonal block.
Diagonal blocks corresponding to real-valued eigenvalues $\eta$ take the form
$(\eta I - \widehat{\mathcal{L}})$, and are amenable to standard preconditioning
techniques as used, e.g., for backward Euler. While $2\times 2$ diagonal blocks
corresponding to complex eigenvalues take the form
$\begin{bmatrix} \eta I - \widehat{\mathcal{L}} & \phi I\\
-\frac{\beta^2}{\phi} I & \eta I - \widehat{\mathcal{L}}\end{bmatrix}.$
Effective block preconditioners for such matrices are developed in
\Cref{sec:theory}, including theory guaranteeing the (inner) preconditioned
Schur complement has a small, bounded, order-one condition number.

\begin{remark}[Real Schur decomposition]
A real Schur decomposition is not new to
Runge-Kutta literature and is most notably used in the RADAU code \cite{hairer99}.
The key contribution here for the simplified Newton setting is proving a
robust and general way to precondition the resulting operators in the
context of numerical PDEs (see \Cref{sec:theory}). Moreover, the real
Schur decomposition applied to the simplified Newton setting after
pulling out an $A_0^{-1}\otimes I$ provides the key motivation for
the development of more general nonlinear iterations introduced in
the following section.
\end{remark}

\subsection{General nonlinear iterations}\label{sec:nonlinear:gen}

Note that most nonlinear iterations, including Newton, Picard, and
other fixed-point iterations, can all be expressed as linearly preconditioned
nonlinear Richardson iterations. For nonlinear functional
$\mathcal{F}(\mathbf{x}) = 0$, such an iteration takes the form
\begin{align}\label{eq:non_rich}
\mathbf{x}_{k+1} = \mathbf{x}_k + \mathcal{P}^{-1}\mathcal{F}(\mathbf{x}_k).
\end{align}
For preconditioner $\mathcal{P} := -J[\mathbf{x}_k]$ given by the (negative)
Jacobian of $\mathcal{F}(\mathbf{x})$ evaluated at $\mathbf{x}_k$, \eqref{eq:non_rich}
yields a Newton iteration. For $\mathcal{P}$ given by a zero-th order linearization
of $\mathcal{F}(\mathbf{x})$ (the nonlinear operator evaluated at $\mathbf{x}_k$),
\eqref{eq:non_rich} yields a Picard iteration. In general, thinking of nonlinear
iterations as linear preconditioners for nonlinear Richardson iterations
\eqref{eq:non_rich} naturally allows for various levels of approximation,
which is the focus of this section.

Now let us return to \eqref{eq:keq} for $\mathcal{L}_i\neq\mathcal{L}_j$, but
extract the real Schur decomposition as in \Cref{sec:nonlinear:gen}. Continuing
with the simplified representation $\widehat{\mathcal{L}}_i := \delta t M^{-1}\mathcal{L}_i$,
this yields the linear system
\begin{align}\label{eq:keq4}
\left( R_0\otimes I - (Q_0^T\otimes I) \begin{bmatrix}
	\widehat{\mathcal{L}}_1  & \\ & \ddots \\ && \widehat{\mathcal{L}}_s\end{bmatrix}
	(Q_0\otimes I)\right) (R_0^{-1}Q_0^T\otimes I) \mathbf{k}\
= (Q_0^T\otimes I)\mathbf{f}.
\end{align}
Picard and Newton iterations both require the solution of such a system each
iteration (see $\mathcal{P}^{-1}$ in \eqref{eq:non_rich}). Here we propose
approximations to the solution of \eqref{eq:keq4} that are (i) solvable using
techniques similar to the simplified Newton setting in
\Cref{sec:nonlinear:simp}, and (ii) yield nonlinear convergence close to a true
Newton or Picard iteration. In principle, these approximations can also be
iterated to convergence in the linear sense, yielding a precise Newton or Picard
iteration, but here we opt to apply the approximation directly as the nonlinear
preconditioner, resolving the error between the approximation and an exact
Newton/Picard iteration in the outer nonlinear iteration. Similar to inexact
Newton methods, such approaches are often more efficient in practice than the
corresponding exact methods.

To develop effective approximations, we are particularly interested in the operator
\begin{align}\label{eq:Q0approx}
\widehat{P} \coloneqq (Q_0^T\otimes I) \begin{bmatrix}
	\widehat{\mathcal{L}}_1  & \\ & \ddots \\ && \widehat{\mathcal{L}}_s\end{bmatrix}
	(Q_0\otimes I)
= \begin{bmatrix}
	\bm{d}^T_{1,1} \widehat{\bm{{\cal L}}} & \cdots & \bm{d}^T_{1,s} \widehat{\bm{{\cal L}}} \\
	\vdots & & \vdots \\
	\bm{d}^T_{s,1} \widehat{\bm{{\cal L}}} & \cdots & \bm{d}^T_{s,s} \widehat{\bm{{\cal L}}}
	\end{bmatrix},
\end{align}
where $\bm{d}^T_{k,\ell} = \Big((Q_0^T)_{k ,1} (Q_0)_{1, \ell},
\ldots, (Q_0^T)_{k, s} (Q_0)_{s, \ell} \Big) \in \mathbb{R}^s$ is a scalar row
vector, $\bm{\widehat{{\cal L}}} = (\widehat{\mathcal{L}}_1; \ldots; \widehat{\mathcal{L}}_s)$
is a block column vector of the linearized operators, and
\begin{align*}
\bm{d}^T_{k,\ell} \widehat{\bm{{\cal L}}} = \sum \limits_{i = 1}^s (d_{k, \ell})_i
	\widehat{\mathcal{L}}_i.
\end{align*}
Note that the vector $\bm{d}_{k, \ell}$ represents the element-wise product
between the $k$th row of $Q_0^T$ and the $\ell$th column of $Q_0$. By the
orthogonality of $Q_0$, we have $\sum_{i = 1}^s (d_{k, \ell})_i =
\delta_{k,\ell}$, where $\delta_{k,\ell}$ is the Kronecker delta. Thus, when
$\widehat{\mathcal{L}}_i = \widehat{\mathcal{L}}_j$, \eqref{eq:Q0approx} is
block diagonal, given by $I\otimes\widehat{\mathcal{L}}$.
Due to the off-diagonal zero sums, here we claim that \eqref{eq:Q0approx}
can be well-approximated by some block-diagonal matrix or block upper triangular
matrix. Adding $R_0\otimes I$ to such an approximation then yields an
approximation to \eqref{eq:keq4}, which can be easily inverted using block backward
substitution.

As an example, consider the matrix $\widehat{P}$ from \eqref{eq:Q0approx} for the two-stage Gauss and
Radau IIA methods in bracket notation (to three digits of accuracy) {where $\{a_1,a_2\}\mapsto
a_1\widehat{\mathcal{L}}_1 + a_2\widehat{\mathcal{L}}_2$}:
\begin{align}
\label{eq:Gauss4Radau3_example}
\textnormal{Gauss(4):} \hspace{1ex}
	\begin{bmatrix}
	\{1,0\} & \{0,0\} \\
	 \{0,0\} & \{0,1\} \\
	\end{bmatrix},
	\hspace{3ex}
\textnormal{Radau\, IIA(3):} \hspace{1ex}
	\begin{bmatrix}
	\{0.985,0.015\} & \{0.121,-0.121\}\\
	\{0.121,-0.121\} & \{0.015,0.985\}\\
	\end{bmatrix}.
\end{align}
Note that there is no approximation in two-stage Gauss because the operator
\eqref{eq:Q0approx} is already block diagonal, that is, it is straightforward to
apply a true Newton or Picard iteration to two-stage Gauss using analogous
block-preconditioning techniques as used for simplified Newton. For two-stage
Radau IIA, we see that the diagonal blocks are almost defined by the
(linearized) operator evaluated at a single time step, which provides a natural
and simple approximation. The off-diagonal blocks are simply the difference
between successive stages, $0.121(\widehat{\mathcal{L}}_1 - \widehat{\mathcal{L}}_2)$.
Such entries could be included in the preconditioning for the upper triangular
portion of the matrix (adding a few additional matrix-vector products and some
memory usage), or simply ignored altogether under the assumption that
$0.121(\widehat{\mathcal{L}}_1 - \widehat{\mathcal{L}}_2$) is ``small'' relative
to the diagonal blocks in some sense. Even for reasonably stiff problems, the
operator often does not change substantially between two stages. Large changes
in the operator between temporal stages are often an indication that the time
step is too large to adequately resolve the nonlinear behavior of the
equations. Similar structure as discussed for the two-stage methods holds for
other methods as well.

Motivated by the above discussion, we consider Newton-like methods (or more
generally some fixed-point iteration as in \eqref{eq:non_rich}) which use
approximate Jacobians having a (block) sparsity pattern contained within that of
$R_0 \otimes I$. That is, we replace the $\widehat{P}$ operator
\eqref{eq:Q0approx} in the true Jacobian \eqref{eq:keq4} with a block upper
triangular approximation $\widetilde{P} \approx \widehat{P}$. Recall by
constructing $\widetilde{P}$ to be block upper triangular, we can then invert
the resulting operator $R_0 \otimes I -\widetilde{P}$ via block backward
substitution, preconditioning each $1\times 1$ or $2\times 2$ diagonal block
similar to the simplified Newton setting in \Cref{sec:nonlinear:simp} (formal
details on preconditioning are introduced in \Cref{sec:theory}). In addition to
the simplified Newton method discussed in \Cref{sec:nonlinear:simp}, we propose the following
three (successively more accurate) approximations to \eqref{eq:Q0approx}. 
{
As an example, for each of the following approximations, the matrix $\widetilde{P}$ derived from $\widehat{P}$ in \eqref{eq:Gauss4Radau3_example} for the 2-stage Radau IIA(3) scheme is also shown.
}
\vspace{1ex}
\begin{enumerate}
\setlength\itemsep{0em}
\item[0.] \underline{Simplified Newton:} As in \Cref{sec:nonlinear:simp}, apply a
simplified Newton method by evaluating $\mathcal{L}$ at the same time point for
all stages. That is, $\widetilde{P} = I \otimes \widehat{{\cal L}}_k$ for some $k$.
{
\begin{align*}
\textnormal{Radau\, IIA(3):} \hspace{1ex}
\widetilde{P}
=
\begin{bmatrix}
\widehat{{\cal L}}_k & 0\\
0 & \widehat{{\cal L}}_k\\
\end{bmatrix}.
\end{align*}
}

\item \underline{Newton-like(1):} Truncate $\widehat{P}$ \eqref{eq:Q0approx} to
be block diagonal and lump the coefficients of each diagonal term $\bm{d}_{i,i}$
to the largest one so that each diagonal block of $\widetilde{P}$ contains only
one matrix from $\widehat{\bm{{\cal L}}}$. That is, the $i$th diagonal block of
$\widetilde{P}$ is $\widehat{{\cal L}}_{k}$, where $k = \arg \max \big(|(d_{i,i})_1|,
\ldots, |(d_{i,i})_s| \big)$.
{
\begin{align*}
\textnormal{Radau\, IIA(3):} \hspace{1ex}
\widetilde{P}
=
\begin{bmatrix}
\widehat{{\cal L}}_1 & 0\\
0 & \widehat{{\cal L}}_2\\
\end{bmatrix}.
\end{align*}
}

\item \underline{Newton-like(2):} Truncate $\widehat{P}$ \eqref{eq:Q0approx} to
be block diagonal. That is, the $i$th diagonal block of $\widetilde{P}$ is
$\bm{d}^T_{i,i} \widehat{\bm{{\cal L}}}$.
{
\begin{align*}
\textnormal{Radau\, IIA(3):} \hspace{1ex}
\widetilde{P}
=
\begin{bmatrix}
0.985 \widehat{{\cal L}}_1 + 0.015 \widehat{{\cal L}}_2 & 0\\
0 & 0.015\widehat{{\cal L}}_1 + 0.985 \widehat{{\cal L}}_2\\
\end{bmatrix}.
\end{align*}
}

\item \underline{Newton-like(3):} Truncate $\widehat{P}$ \eqref{eq:Q0approx}
inside the block upper triangular sparsity pattern of $R_0 \otimes I$. This
option adds a number of matrix-vector products, but is also the best approximation
to an exact Newton or Picard iteration (and corresponds to an exact Newton
iteration for 2-stage methods).
{
\begin{align*}
\textnormal{Radau\, IIA(3):} \hspace{1ex}
\widetilde{P}
=
\begin{bmatrix}
0.985 \widehat{{\cal L}}_1 + 0.015 \widehat{{\cal L}}_2 & 0.121\widehat{{\cal L}}_1 -0.121 \widehat{{\cal L}}_2\\
0.121\widehat{{\cal L}}_1 -0.121 \widehat{{\cal L}}_2 & 0.015\widehat{{\cal L}}_1 + 0.985 \widehat{{\cal L}}_2\\
\end{bmatrix}.
\end{align*}
}

\end{enumerate}
Of course there are other combinations possible, including using, e.g.,
Newton-like(1) as a preconditioner for Newton-like(3), but we do not
elaborate for the sake of space.

\section{Linear preconditioning theory}\label{sec:theory}

The methods derived in \Cref{sec:nonlinear} use block backward substitution
which requires solving $2\times 2$ block systems along the lines of 
\begin{align}\label{eq:block00}
\begin{bmatrix} \eta I - \bm{d}_{1,1}^T \widehat{\bm{{\cal L}}} & \phi I - \bm{d}_{1,2}^T \widehat{\bm{{\cal L}}}\\
-\frac{\beta^2}{\phi} I - \bm{d}_{2,1}^T \widehat{\bm{{\cal L}}} & \eta I -  \bm{d}_{2,2}^T \widehat{\bm{{\cal L}}}\end{bmatrix},
\end{align}
with the off-diagonal blocks only including non-identity terms for
method 3 from \Cref{sec:nonlinear:gen}. As discussed previously,
we expect the non-identity off-diagonal terms to typically be small. This
section consider block preconditioning of the general linear problem
that arises in methods (0), (1), and (2), or methods (3) by neglecting
non-identity off-diagonal coupling in \eqref{eq:block00} arising from the
$\bm{d}_{1,2}^T \widehat{\bm{{\cal L}}}$ and
$\bm{d}_{2,1}^T \widehat{\bm{{\cal L}}}$ terms:
\begin{align}\label{eq:block0}
\begin{bmatrix} \eta I - \widehat{\mathcal{L}}_1 & \phi I\\
-\frac{\beta^2}{\phi} I & \eta I - \widehat{\mathcal{L}}_2\end{bmatrix},
\end{align}
for some $\eta > 0, \phi \neq 0$. {Note, excusing the slight abuse of
notation, for ease of notation we have let
$\widehat{{\cal L}}_{i} = \bm{d}_{i,i}^T \widehat{\bm{{\cal L}}}$ denote the
approximate operator from linearization method (0), (1), and (2), or (3), rather
the direct linearization about the $k$th stage vector as used elsewhere
in this paper.} In
practice the block preconditioning methods developed in this section have
proven equally robust on systems resulting from nonlinear method (3)
as those resulting from methods (1) and (2) (for which the theory applies),
indicating that \eqref{eq:block0} is a suitable proxy for \eqref{eq:block00} for
theoretical purposes. In \eqref{eq:block0} it is assumed that
$W(\widehat{\mathcal{L}}_i) \leq 0$ for $i=1,2$ (\Cref{ass:fov}).\footnote{
Note that for nonlinear method (2), we are taking a weighted sum of operators that
satisfy \Cref{ass:fov}. Due to the non-negativity of the weights, the
summation also satisfies \Cref{ass:fov}.} We will solve
\eqref{eq:block0} using Krylov methods with block lower-triangular
preconditioners of the form
\begin{equation}\label{eq:Lprec}
L_P := \begin{bmatrix} \eta I - \widehat{\mathcal{L}}_1 & \mathbf{0} \\ -\frac{\beta^2}{\phi} I
	& \widehat{S}\end{bmatrix}^{-1},
\end{equation}
where $\widehat{S}$ is some approximation to the Schur complement of \eqref{eq:block0},
which is given by
\begin{align}\label{eq:Schur}
S & := \eta I - \widehat{\mathcal{L}}_2 + \beta^2 (\eta I - \widehat{\mathcal{L}}_1)^{-1}.
\end{align}

When applying GMRES to block $2\times 2$ operators preconditioned with a lower
(or upper) triangular preconditioner as in \eqref{eq:Lprec}, convergence
is exactly defined by convergence of GMRES applied to the preconditioned Schur
complement, $\widehat{S}^{-1}S$ \cite{2x2block}. If $\widehat{S} = S$ is exact,
exact convergence on the larger $2\times2$ system is guaranteed in two iterations
(or one iteration with block LDU). This section focuses on the development of
robust preconditioners for the Schur complement \eqref{eq:Schur}. In
particular, we develop a preconditioner for $S$ such that the preconditioned
operator has a bounded condition number, independent of $\cL_1$ and $\cL_2$,
and with only weak dependence on the order of time integration. The preconditioner 
is also \emph{asymptotically optimal} in the sense that the condition number is
bounded independent of mesh spacing and time step. The analysis derived herein
is based on the assumption that a small, bounded condition number corresponds
to better preconditioners for nonsymmetric matrices.

As a result of \Cref{ass:fov}, the second term in \eqref{eq:Schur},
$(\eta I - \widehat{\mathcal{L}}_1)^{-1}$ {is a compact operator adding
a small positive perturbation to $\eta I - \widehat{\mathcal{L}}_2$.
To that end, we approximate it with an identity perturbation and
consider preconditioners of the form}
\begin{align} \label{eq:schur-approx}
\widehat{S}_\gamma := \gamma I - \widehat{\mathcal{L}}_2
\end{align}
for some $\gamma > 0$. \Cref{sec:theory:simp} considers the simpler case
of $\widehat{\mathcal{L}}_1 = \widehat{\mathcal{L}}_2$, deriving tight bounds
on the conditioning of the preconditioned operator as well as an optimal choice
of $\gamma\mapsto\gamma_*$ that minimizes the maximum condition number
taken over all $\widehat{\mathcal{L}}$. \Cref{sec:theory:gen} then extends the theory to
the more general $\widehat{\mathcal{L}}_1 \neq \widehat{\mathcal{L}}_2$.
Under an additional assumption that $\widehat{\mathcal{L}}_1$ and
$\widehat{\mathcal{L}}_2$ are ``close'' in some sense, the condition number
of the preconditioned operator is bounded via cond$(\widehat{S}_{\gamma_*}^{-1}S)
\leq 2 + \tfrac{\beta^2}{\eta^2}$, which is only a factor of two larger than
the tight bounds derived for $\widehat{\mathcal{L}}_1 = \widehat{\mathcal{L}}_2$.

In practice, we typically do not want to apply $(\eta I - \widehat{\mathcal{L}}_1)^{-1}$
or $\widehat{S}_\gamma^{-1}$ exactly for each iteration of the preconditioner
\eqref{eq:Lprec}. It is well-known in the block-preconditioning community
that a few iterations of an effective preconditioner, such as multigrid,
to represent the inverse of diagonal blocks in \eqref{eq:Lprec} typically
yields convergence on the larger $2\times 2$ operator just as fast as if
performing direct solves, at a fraction of the cost. Thus, in practice we
propose a block-triangular preconditioner similar to \eqref{eq:Lprec}, but
which only applies some approximation to the diagonal block inverses,
$(\eta I - \widehat{\mathcal{L}}_1)^{-1}$ and
$\widehat{S}^{-1} := (\gamma_* I - \widehat{\mathcal{L}}_2)^{-1}$ for a
specific $\gamma_*$ introduced in the following section.

\subsection{$\cL_1 = \cL_2$}\label{sec:theory:simp}

Consider right preconditioning the Schur complement with preconditioner
$(\gamma I- \widehat{\mathcal{L}}_2)^{-1}$. The preconditioned Schur complement
takes the form
\begin{align}\label{eq:P_gamma}
\mathcal{P}_\gamma &\coloneqq
	\left[\eta I - \widehat{\mathcal{L}}_2 + \beta^2 (\eta I - \widehat{\mathcal{L}}_1)^{-1}\right]
	(\gamma I- \widehat{\mathcal{L}}_2)^{-1} \\
& = \left[ (\eta^2+\beta^2) I - \eta (\widehat{\mathcal{L}}_1 + \widehat{\mathcal{L}}_2) +
		\widehat{\mathcal{L}}_2\widehat{\mathcal{L}}_1 \right]
		(\eta I- \widehat{\mathcal{L}}_1)^{-1}(\gamma I- \widehat{\mathcal{L}}_2)^{-1}.
		\nonumber
\end{align}
Making the simplification $\cL_1 = \cL_2 = \cL$, $\mathcal{P}_\gamma$ takes the
simplified form
\begin{align}\label{eq:P_gamma2}
\mathcal{P}_\gamma & = \left[ (\eta^2+\beta^2) I - 2\eta \widehat{\mathcal{L}} +
	\widehat{\mathcal{L}}^2 \right](\eta I- \widehat{\mathcal{L}})^{-1}
	(\gamma I- \widehat{\mathcal{L}})^{-1}.
\end{align}
The following theorem (restated from \cite[Th. 5]{irk1}) tightly bounds
the condition number of a slightly more general operator than the preconditioned
Schur complement \eqref{eq:P_gamma2}, and proves the optimality of a certain
$\gamma_* \in (0,\infty)$ in term of minimizing the maximum condition
number over all $\widehat{\mathcal{L}}$. The corollary following it provides
tight bounds on the condition number of \eqref{eq:P_gamma2} for the optimal
choice of $\gamma = \gamma_*$.
Although the resulting conditioning here is slightly worse than can be achieved with the
method designed specifically for linear PDEs \cite[Cor. 6]{irk1}, \Cref{tab:cond} shows that
for up to 10th-order integration, at worst the preconditioned Schur complement
has condition number on the order of 2--3.

\begin{theorem}[Tight bounds on condition number, $\cL_1 = \cL_2$ \cite{irk1}]
\label{th:cond_L1=L2}
Let $\cL$ be real valued and suppose \Cref{ass:eig,ass:fov} hold,
that is, $\eta > 0$ and $W(\cL) \leq 0$. Let $\mathcal{P}_{\delta,\gamma}$ denote
the preconditioned operator
\begin{align} \label{eq:P_gen}
{\cal P}_{\delta, \gamma} \coloneqq [(\eta I  - \cL)^2 + \beta^2 I]
	(\delta I - \cL)^{-1} (\gamma I - \cL)^{-1}, \quad \delta, \gamma \in (0, \infty),
\end{align}
in which $[(\eta I  - \cL)^2 + \beta^2 I]$ is preconditioned with
$(\delta I - \cL)^{-1} (\gamma I - \cL)^{-1}$, for $\delta, \gamma \in (0, \infty)$.
Let $\kappa({\cal P}_{\delta, \gamma})$ denote the two-norm condition number of ${\cal P}_{\delta,\gamma}$,
and define $\gamma_*$ by $\gamma_* \coloneqq \frac{\eta^2+\beta^2}{\delta}$. Then
\begin{align} \label{eq:kappa_gen_gamma*_bound}
\kappa(\mathcal{P}_{\delta, \gamma_*}) \leq \frac{1}{2 \eta} \left( \delta + \frac{\eta^2 + \beta^2}{\delta} \right).
\end{align}

Moreover, (i) bound \eqref{eq:kappa_gen_gamma*_bound} is tight {when considered over
all $\cL$ that satisfy \Cref{ass:fov}} in the sense that $\exists$ $\cL$ such that
\eqref{eq:kappa_gen_gamma*_bound} holds with equality, and (ii) $\gamma = \gamma_*$ is optimal
in the sense that, without further assumptions on $\cL$, $\gamma_*$ minimizes a tight
upper bound on $\kappa({\cal P}_{\delta, \gamma})$, with {$\gamma_* =
\argmin_{\gamma\in(0,\infty)} \max_{\cL} \kappa({\cal P}_{\delta, \gamma})$}.
\end{theorem}

\begin{corollary}[Condition-number bounds, independent of $\widehat{\mathcal{L}}$]
\label{col:cond_L1=L2}
{The maximum $\ell^2$ condition number of the preconditioned operator \eqref{eq:P_gamma2}
over \emph{all} $\cL$ that satisfy \Cref{ass:fov}, is minimized over
$\gamma \in (0, \infty)$ by}
\begin{align} \label{eq:gamma*}
\gamma_* =  \eta + \frac{\beta^2}{\eta}.
\end{align}
Furthermore, the maximum condition number of \eqref{eq:P_gamma2} when $\gamma = \gamma_*$
is tightly bounded for all $\cL$ by
\begin{align} \label{eq:kappa_gamma*_L1=L2}
\kappa({\cal P}_{\gamma_*}) \leq  1 +  \tfrac{\beta^2}{2\eta^2}.
\end{align}
\end{corollary}
\begin{proof}
The preconditioned operator \eqref{eq:P_gamma2} is equivalent to the more
general operator \eqref{eq:P_gen} analyzed in \Cref{th:cond_L1=L2} with $\delta
= \eta$. Upon letting $\delta = \eta$, the value of $\gamma_*$ \eqref{eq:gamma*}
follows by definition from \Cref{th:cond_L1=L2} and the bound on $\kappa({\cal
P}_{\gamma_*})$ \eqref{eq:kappa_gamma*_L1=L2} follows from
\eqref{eq:kappa_gen_gamma*_bound}.
\end{proof}

\Cref{tab:cond} provides condition number bounds from \Cref{col:cond_L1=L2} and
\eqref{eq:kappa_gamma*_L1=L2} for Gauss, Radau IIA, and Lobatto IIIC Runge-Kutta methods.
{
\renewcommand{\arraystretch}{1.15}
\begin{table}[!ht]
  \centering
  \begin{tabular}{| c | c | cc | cc | ccc |}    \hline
\multirow{2}{*}{Stages} & 2 & \multicolumn{2}{c}{3} & \multicolumn{2}{|c}{4} & \multicolumn{3}{|c|}{5} \\

& {$\lambda_{1,2}^\pm$} & {$\lambda_1$} & {$\lambda_{2,3}^\pm$} & {$\lambda_{1,2}^\pm$} &
	{$\lambda_{3,4}^\pm$} & {$\lambda_1$} & {$\lambda_{2,3}^\pm$} & {$\lambda_{4,5}^\pm$} \\
\hline
Gauss  & 1.17 & 1.00 & 1.46 & 1.80 & 1.05 & 1.00 & 2.18 & 1.14 \\
Radau IIA  & 1.25 & 1.00 & 1.65 & 2.11 & 1.06 & 1.00 & 2.60 & 1.16 \\
Lobatto IIIC & 1.50 & 1.00 & 2.11 & 2.76 & 1.07 & 1.00 & 3.44 & 1.19 \\\hline
  \end{tabular}
  \caption{Bounds on $\kappa(\mathcal{P}_{\gamma_*})$ from \Cref{col:cond_L1=L2} and
  \eqref{eq:kappa_gamma*_L1=L2} for Gauss, Radau IIA, and Lobatto IIIC integration,
  with 2--5 stages. Each column within a given set of stages corresponds
  to either a real eigenvalue, $\lambda_1 = \eta$, or a conjugate pair of eigenvalues,
  e.g., $\lambda_{2,3}^\pm = \eta \pm \mathrm{i}\beta$, of
  $A_0^{-1}$.}\label{tab:cond}
\end{table}
}

\begin{remark}[Symmetric definite and skew symmetric operators]
Using eigenvalue analyses, it is possible to derive tight upper bounds on the
condition number of \eqref{eq:P_gamma2} for all $\gamma \in (0, \infty)$ when
$\cL$ is symmetric negative semi-definite (SNSD) or skew symmetric (SS)
(see \cite{exh} for related derivations).
These tight upper bounds achieve equality for all $\gamma \in (0,
\infty)$ as the spectrum of $\cL$ becomes dense in $[0, \infty)$ for SNSD
$\cL$, and dense in $(- \mathrm{i} \infty, \mathrm{i} \infty)$ for SS $\cL$. In each
case, the tight upper bounds are minimized over all $\gamma \in (0, \infty)$
when $\gamma = \gamma_*$, for $\gamma_*$ given by \eqref{eq:gamma*}, which is
perhaps unsurprising given \Cref{col:cond_L1=L2}. At the minimum $\gamma =
\gamma_*$, the tight bound for the SNSD case is
\begin{align*}
\kappa({\cal P}_{\gamma*}) \leq \frac{1}{2} \left( 1 + \sqrt{1 + \beta^2/\eta^2} \right),
\end{align*}
and for the SS case it is equal to that in \eqref{eq:kappa_gamma*_L1=L2}, due
to the general bound  of \eqref{eq:kappa_gamma*_L1=L2} achieving equality for a
matrix $\cL$ having eigenvalues $\{0, \pm \mathrm{i} \sqrt{\eta^2 + \beta^2} \}$.
\end{remark}

\subsection{$\cL_1 \neq \cL_2$}\label{sec:theory:gen}

This section considers the more general case of $\cL_1 \neq \cL_2$. Similar to
\Cref{th:cond_L1=L2} and \Cref{col:cond_L1=L2}, \Cref{th:cond} derives an upper
bound on condition number of the right-preconditioned Schur complement as in
\eqref{eq:P_gamma}, with $\gamma_*$ as in \eqref{eq:gamma*}.\footnote{
Considering right preconditioning is a theoretical tool to facilitate
the proof of \Cref{th:cond}, but in practice left and right preconditioning
have both proven effective.} {The proof we
derived requires an additional assumption regarding the relation of $\cL_1$
and $\cL_2$, namely that $\langle\widehat{\mathcal{L}}_1\mathbf{w},
\widehat{\mathcal{L}}_2\mathbf{w}\rangle\geq 0$. It is worth pointing
out that we do not believe this assumption is necessary for the result to
hold, particularly for the discretization of PDEs where $\widehat{\mathcal{L}}_1$
and $\widehat{\mathcal{L}}_2$ are structured and correspond to the same 
operator evaluated at successive Runge-Kutta stages. However, we have been
unable to find a more general proof that does not use this assumption.}
Under this additional assumption, \Cref{th:cond}
proves that the condition number of the preconditioned Schur complement for
$\cL_1 \neq \cL_2$ is at most $2\times$ larger than as proven for
$\cL_1 = \cL_2$ in \Cref{col:cond_L1=L2}. By \Cref{tab:cond}, it is clear the
conditioning is still bounded by a small, order-one constant, even for
10th-order integration.

\begin{theorem}[Conditioning of preconditioned operator]\label{th:cond}
Suppose Assumptions \ref{ass:eig} and \ref{ass:fov} hold, that is, $\eta > 0$
and $W(\widehat{\mathcal{L}}_1),W(\widehat{\mathcal{L}}_2) \leq 0$.
Additionally, assume that $\langle\widehat{\mathcal{L}}_1\mathbf{w},
\widehat{\mathcal{L}}_2\mathbf{w}\rangle\geq 0$. Let $\mathcal{P}_\gamma$
denote the right-preconditioned Schur complement \eqref{eq:P_gamma}, with
$\gamma = \gamma_* := \tfrac{\eta^2+\beta^2}{\eta}$ as in \eqref{eq:gamma*}.
Let $\kappa(\mathcal{P}_{\gamma_*})$ denote the two-norm condition number of
$\mathcal{P}_{\gamma_*}$. Then
\begin{align}\label{eq:gammastar_cond}
	\kappa(\mathcal{P}_{\gamma_*}) \leq 2 + \frac{\beta^2}{\eta^2}.
\end{align}
\end{theorem}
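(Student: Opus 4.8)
The plan is to reduce the $\widehat{\mathcal{L}}_1 \neq \widehat{\mathcal{L}}_2$ case to a perturbation of the symmetric situation already handled in \Cref{col:cond_L1=L2}. Recall from \eqref{eq:P_gamma} that the preconditioned Schur complement factors as
\[
\mathcal{P}_{\gamma_*} = \big[ (\eta^2+\beta^2) I - \eta(\widehat{\mathcal{L}}_1 + \widehat{\mathcal{L}}_2) + \widehat{\mathcal{L}}_2 \widehat{\mathcal{L}}_1 \big]\,(\eta I - \widehat{\mathcal{L}}_1)^{-1}(\gamma_* I - \widehat{\mathcal{L}}_2)^{-1}.
\]
To bound $\kappa(\mathcal{P}_{\gamma_*}) = \|\mathcal{P}_{\gamma_*}\|\,\|\mathcal{P}_{\gamma_*}^{-1}\|$, I would estimate, for an arbitrary unit vector $\mathbf{x}$, upper and lower bounds on $\|\mathcal{P}_{\gamma_*}\mathbf{x}\|$. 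The standard trick (as in \cite{irk1}) is to substitute $\mathbf{x} = (\gamma_* I - \widehat{\mathcal{L}}_2)\mathbf{y}$ and instead compare $\|[(\eta^2+\beta^2)I - \eta(\widehat{\mathcal{L}}_1+\widehat{\mathcal{L}}_2) + \widehat{\mathcal{L}}_2\widehat{\mathcal{L}}_1]\mathbf{z}\|$ against $\|(\eta I - \widehat{\mathcal{L}}_1)(\gamma_* I - \widehat{\mathcal{L}}_2)\mathbf{z}\|$ for $\mathbf{z} = (\eta I - \widehat{\mathcal{L}}_1)^{-1}(\gamma_* I - \widehat{\mathcal{L}}_2)^{-1}\mathbf{x}$; i.e.\ the condition number equals the supremum and infimum over unit $\mathbf{z}$ of $\|N\mathbf{z}\| / \|D\mathbf{z}\|$, where $N := (\eta^2+\beta^2)I - \eta(\widehat{\mathcal{L}}_1+\widehat{\mathcal{L}}_2) + \widehat{\mathcal{L}}_2\widehat{\mathcal{L}}_1$ is the numerator bracket and $D := (\eta I - \widehat{\mathcal{L}}_1)(\gamma_* I - \widehat{\mathcal{L}}_2)$.

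The key computation is to expand $\|N\mathbf{z}\|^2$ and $\|D\mathbf{z}\|^2$ and compare them termwise. Writing $a := \|\mathbf{z}\|^2$, $b_i := \langle\widehat{\mathcal{L}}_i\mathbf{z},\mathbf{z}\rangle$ (so $\Re b_i \leq 0$ by \Cref{ass:fov}), $c_i := \|\widehat{\mathcal{L}}_i\mathbf{z}\|^2$, $e := \langle\widehat{\mathcal{L}}_1\mathbf{z},\widehat{\mathcal{L}}_2\mathbf{z}\rangle$, and $d := \|\widehat{\mathcal{L}}_2\widehat{\mathcal{L}}_1\mathbf{z}\|^2$, both squared norms become explicit quadratic expressions in these quantities, using $\gamma_* = (\eta^2+\beta^2)/\eta$. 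The hypothesis $\langle\widehat{\mathcal{L}}_1\mathbf{w},\widehat{\mathcal{L}}_2\mathbf{w}\rangle \geq 0$ enters precisely here: it forces $\Re e \geq 0$, which lets me drop or sign-control the cross terms that would otherwise be uncontrolled. I would then show $\|N\mathbf{z}\|^2 \leq (2 + \beta^2/\eta^2)^2 \|D\mathbf{z}\|^2$ and $\|N\mathbf{z}\|^2 \geq \|D\mathbf{z}\|^2$ (or some normalization giving the product $2+\beta^2/\eta^2$), collecting the field-of-values and inner-product bounds to discard all the indefinite contributions. A useful intermediate identity is that when $\widehat{\mathcal{L}}_1 = \widehat{\mathcal{L}}_2$ the bracket factors as $(\eta I - \widehat{\mathcal{L}})^2 + \beta^2 I$, recovering exactly the \Cref{col:cond_L1=L2} bound $1 + \beta^2/(2\eta^2)$; the general bound should be a controlled degradation of this, with the factor-of-two loss coming from bounding a symmetrized cross term like $\eta(\widehat{\mathcal{L}}_1+\widehat{\mathcal{L}}_2)$ by $2\eta\widehat{\mathcal{L}}_i$-type estimates.

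The main obstacle I anticipate is handling the product term $\widehat{\mathcal{L}}_2\widehat{\mathcal{L}}_1$ in $N$ versus the product $\widehat{\mathcal{L}}_1\widehat{\mathcal{L}}_2$-type terms implicit in expanding $\|D\mathbf{z}\|^2 = \|(\eta I - \widehat{\mathcal{L}}_1)(\gamma_* I - \widehat{\mathcal{L}}_2)\mathbf{z}\|^2$: these operators do not commute, so $\|\widehat{\mathcal{L}}_2\widehat{\mathcal{L}}_1\mathbf{z}\|$ and $\|\widehat{\mathcal{L}}_1\widehat{\mathcal{L}}_2\mathbf{z}\|$ need not be equal, and the cross terms $\langle\widehat{\mathcal{L}}_2\widehat{\mathcal{L}}_1\mathbf{z}, \widehat{\mathcal{L}}_i\mathbf{z}\rangle$ are genuinely awkward. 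I expect the resolution is to bound $\|N\mathbf{z}\|$ and $\|D\mathbf{z}\|$ separately by the triangle inequality applied to their constituent pieces, rather than expanding the squared norms fully — controlling $\|\widehat{\mathcal{L}}_i\mathbf{z}\|$ by $\|(\eta I - \widehat{\mathcal{L}}_i)\mathbf{z}\|$ via the field-of-values bound $\|(\eta I - \widehat{\mathcal{L}}_i)\mathbf{z}\|^2 \geq \eta^2\|\mathbf{z}\|^2 + \|\widehat{\mathcal{L}}_i\mathbf{z}\|^2$ — so that the quadratic-product terms are absorbed into $\|D\mathbf{z}\|$ cleanly and the $\langle\widehat{\mathcal{L}}_1\mathbf{z},\widehat{\mathcal{L}}_2\mathbf{z}\rangle \geq 0$ assumption closes the remaining gap. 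Once the two one-sided bounds $\|N\mathbf{z}\|/\|D\mathbf{z}\| \leq 2+\beta^2/\eta^2$ and $\geq 1$ are in hand, multiplying them gives $\kappa(\mathcal{P}_{\gamma_*}) \leq 2 + \beta^2/\eta^2$.
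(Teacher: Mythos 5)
Your high-level strategy — substitute $\mathbf{x} = D\mathbf{w}$, reduce everything to the ratio $\|N\mathbf{w}\|/\|D\mathbf{w}\|$, and use \Cref{ass:fov} together with $\langle\widehat{\mathcal{L}}_1\mathbf{w},\widehat{\mathcal{L}}_2\mathbf{w}\rangle\geq 0$ — is the right neighborhood, and it is in fact how the paper handles the lower bound. But there are two concrete problems that prevent the argument from closing as you have set it up.

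First, the specific constant split you propose is false. You suggest showing $\|N\mathbf{w}\|/\|D\mathbf{w}\|\leq 2 + \beta^2/\eta^2$ and $\|N\mathbf{w}\|/\|D\mathbf{w}\|\geq 1$. The second inequality fails: take $\widehat{\mathcal{L}}_1 = \widehat{\mathcal{L}}_2 = -\eta I$ (which satisfies all the hypotheses). Then $N = (4\eta^2+\beta^2)I$ and $D = (\gamma_*+\eta)(2\eta)I = (4\eta^2+2\beta^2)I$, so $\|N\mathbf{w}\|/\|D\mathbf{w}\| = (4\eta^2+\beta^2)/(4\eta^2+2\beta^2) < 1$ whenever $\beta\neq 0$. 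The hedge ``or some normalization giving the product'' is what actually needs to be pinned down: the paper proves $\|\mathcal{P}_{\gamma_*}\| \leq 1 + \eta^2/(\eta^2+\beta^2)$ and $s_{\min}(\mathcal{P}_{\gamma_*}) \geq \eta^2/(\eta^2+\beta^2)$, whose quotient is $2 + \beta^2/\eta^2$. Neither factor is $1$.

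Second, and more importantly, you flag the noncommuting product $\widehat{\mathcal{L}}_2\widehat{\mathcal{L}}_1$ as the main obstacle and propose to work around it with triangle inequalities, but you don't find the identity that actually neutralizes it. Note that $D$ must be taken as $(\gamma_* I - \widehat{\mathcal{L}}_2)(\eta I - \widehat{\mathcal{L}}_1)$ — that is the operator whose inverse appears in $\mathcal{P}_{\gamma_*} = N\,[(\eta I - \widehat{\mathcal{L}}_1)^{-1}(\gamma_* I - \widehat{\mathcal{L}}_2)^{-1}]$ — not $(\eta I - \widehat{\mathcal{L}}_1)(\gamma_* I - \widehat{\mathcal{L}}_2)$ as you have written. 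With the correct ordering, $D$ and $N$ carry the \emph{same} product $\widehat{\mathcal{L}}_2\widehat{\mathcal{L}}_1$, so it cancels exactly in $N - D$; and with $\gamma = \gamma_*$ the constant-times-identity term also vanishes, leaving the clean identity $N = D + \tfrac{\beta^2}{\eta}\widehat{\mathcal{L}}_1$. This is the engine of the paper's argument for $s_{\min}$: one writes $\|N\mathbf{w}\|^2/\|D\mathbf{w}\|^2 = 1 - \tfrac{\beta^2}{\eta}\cdot(\text{correction})/\|D\mathbf{w}\|^2$, expands the correction and $\|D\mathbf{w}\|^2$, uses Cauchy--Schwarz on the denominator to produce matching nonnegative terms, and only then invokes $\langle\widehat{\mathcal{L}}_1\mathbf{w},\widehat{\mathcal{L}}_2\mathbf{w}\rangle\geq 0$ to ensure the termwise comparison is legitimate. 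With your ordering of $D$ the commutator $[\widehat{\mathcal{L}}_2,\widehat{\mathcal{L}}_1]$ survives in $N-D$ and there is no comparable reduction, which is precisely the obstacle you anticipated but did not resolve.

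One further remark: the paper bounds $\|\mathcal{P}_{\gamma}\|$ by a different and simpler route, writing $\mathcal{P}_\gamma = I - (\gamma-\eta)(\gamma I - \widehat{\mathcal{L}}_2)^{-1} + \beta^2(\eta I - \widehat{\mathcal{L}}_1)^{-1}(\gamma I - \widehat{\mathcal{L}}_2)^{-1}$ and bounding each piece via the resolvent estimates coming from \Cref{ass:fov}; this gives $\|\mathcal{P}_\gamma\|\leq 1 + \beta^2/(\gamma\eta)$ without touching the extra inner-product hypothesis or the $N/D$ substitution at all. Your plan of running the $N/D$ framework symmetrically for both norms is not wrong in principle, but it makes the upper bound harder than it needs to be, and you would still need the cancellation identity above to get the lower bound to close.
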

\begin{proof}
See \Cref{appendix}.
\end{proof}

\section{Algorithm description}\label{sec:alg}
{
Before moving on to discuss DAEs and numerical results, here we provide a
comprehensive description of the IRK algorithm. First, we introduce some
practical notation and the operators that would arise in practice (rather
than the analysis tools of scaling by $M^{-1}$), and then the algorithm
is given in \Cref{alg:irk}. To simplify the
presentation, assume that $s$ is even, and $A_0^{-1}$ has $s/2$
complex-conjugate eigenvalue pairs $\{\eta_i \pm \mathrm{i} \beta_i \}_{i = 1}^
{s/2}$; it is straightforward to modify the following description for the
alternative case of one real-valued eigenvalue.

Recall that previously we introduced the operator $\widehat{{\cal L}} = \delta t
M^{-1} {\cal L}$ to simplify notation. In practice, rather than solving an
approximate Jacobian system that involves this operator, we solve one that has
first been scaled by $I \otimes M$. That is, we invert the approximate Jacobian
$R_0 \otimes M -(I \otimes M) \widetilde{P}$ rather than $R_0 \otimes
I - \widetilde{P}$ which is based on \eqref{eq:keq4}. Consider decomposing the
approximate Jacobian $R_0 \otimes M - (I \otimes M) \widetilde{P}$ into the sum
of a block diagonal matrix ${\cal D}$ having $2 \times 2$ blocks, and a
strictly block upper triangular matrix ${\cal U}$ having $2 \times 2$ blocks:
\begin{align} \label{eq:D+U}
R_0 \otimes M - (I \otimes M) \widetilde{P}
=
{\cal D} + {\cal U}
=
\begin{bmatrix}
{\cal D}_1 & {\cal U}_{1,2} & {\cal U}_{1,3} & \cdots & {\cal U}_{1,s/2} \\
\mathbf{0} & {\cal D}_2  & {\cal U}_{2,3} & \cdots & {\cal U}_{2,s/2} \\
& \mathbf{0} & \ddots &  &  \vdots \\
& & \ddots  & \ddots  & \vdots \\
& & & \mathbf{0} & {\cal D}_{s/2}
\end{bmatrix}.
\end{align}
The particular structure of these matrices is governed by which of the
Newton-like methods is used. For Newton-like methods 0, 1, and 2,
${\cal U}$ is equal to the strictly (block) upper triangular component
of $R_0 \otimes M$, while for Newton-like method 3 it is equal to the
strictly (block) upper triangular component of
$R_0 \otimes M - (Q_0^T \otimes I) \textrm{diag}(\delta t{\cal L}_1, \ldots,
\delta t {\cal L}_s) (Q_0 \otimes I)$ (see \eqref{eq:Q0approx}). The structure
of the diagonal blocks ${\cal D}_i$ in \eqref{eq:D+U} are equal to those
in \eqref{eq:block00} with each row simply scaled by $M$:
\begin{align}\label{eq:diag_blocks}
{\cal D}_{i}
\coloneqq
\begin{bmatrix} \eta_i M - \delta t \bm{e}_{2i-1,2i-1}^T \bm{{\cal L}} & \phi_i M - \delta t \bm{e}_{2i-1,2i}^T \bm{{\cal L}}\\[1ex]
-\displaystyle{\frac{\beta^2_i}{\phi_i}} M - \delta t \bm{e}_{2i,2i-1}^T \bm{{\cal L}} & \eta_i M - \delta t \bm{e}_{2i,2i}^T \bm{{\cal L}}\end{bmatrix},
\quad i \in \{1, \ldots s/2\},
\end{align}
where $\bm{e}_{a,b}^T \bm{{\cal L}} \approx \bm{d}_{a,b}^T \bm{{\cal L}}$,
with the particular approximation governed by which of the Newton-like
methods is used.

Recall that a lower triangular, Schur-complement-based preconditioner \eqref{eq:Lprec} is used to precondition the Krylov solution of the blocks \eqref{eq:diag_blocks}. In general, after scaling by $M$, this preconditioner takes the form
\begin{align}
L_{P_i}
\coloneqq
\begin{bmatrix}
\eta_i M - \delta t \mathbf{e}_{2i-1,2i-1}^T \bm{{\cal L}} & \mathbf{0} \\ -\displaystyle{\frac{\beta_i^2}{\phi_i}} M - \delta t \mathbf{e}_{2i,2i-1}^T \bm{{\cal L}} & \gamma_i M - \delta t \mathbf{e}_{2i,2i}^T \bm{{\cal L}} \end{bmatrix}^{-1}.
\end{align}
Importantly, when computing the action of this preconditioner at every Krylov iteration, the exact inverses of the inner blocks are approximated with an inexact preconditioner.
Recall here that $\gamma_i$ is some constant, for example, $\gamma_i = \eta_i$ (the naive choice), or $\gamma_i = \eta_i + \beta_i^2/\eta_i$ (the optimal choice). In Line \ref{alg:krylov} of \Cref{alg:irk}, the syntax $\mathbf{x} \gets \textrm{krylov}(A,\mathbf{b}, B)$ means to apply a Krylov solver the system $A \mathbf{x} = \mathbf{b}$ that is left or right preconditioned by $B \approx A^{-1}$.

\begin{algorithm}
  \caption{Advance $\mathbf{u}_n$ to $\mathbf{u}_{n+1}$ using Newton-like
  solve on stage equations \eqref{eq:stages}: ${\mathbf{f}(\mathbf{k}) =
  \mathbf{0}}$, where $\mathbf{k} = (\mathbf{k}_1, \ldots \mathbf{k}_s)$.
  Assume $s$ even, and $A_0^{-1}$ has $s/2$ complex-conjugate eigenvalue pairs.
    \label{alg:irk}}
  \begin{algorithmic}[1]

	\Statex{// Define $\mathbf{f}^{(\ell)} \coloneqq \mathbf{f}(\mathbf{k}^{(\ell)})$}

	\vspace{1ex}
  	\Let{$\ell$}{0}\Comment{Nonlinear iteration index}
  		\State{Initialize $\mathbf{k}^{(\ell)}$ with initial guess for $\mathbf{k}$}

	\vspace{1ex}

	\Statex{// Nonlinear iterations}
	\While{$\Vert \mathbf{f}^{(\ell)} \Vert$ larger than tolerance}
	\vspace{1ex}
		\Statex{\hspace{3ex} // Solve $({\cal D} + {\cal U})(R_0^{-1} Q_0^T \otimes I) \delta \mathbf{k} = - (Q_0^T \otimes I) \mathbf{f}^{(\ell)}$ by solving}
		\Statex{\hspace{3ex} // $({\cal D} + {\cal U})\widehat{\delta \mathbf{k}} = -\widehat{\mathbf{f}^{(\ell)}}$ via block backward substitution}
		\Let{$\mathbf{f}^{(\ell)}$}{$(Q_0^T \otimes I) \mathbf{f}^{(\ell)}$}\Comment{Scale RHS vector}
		\For{$i = s/2 \to 1$}\Comment{Solve for $\widehat{\delta \mathbf{k}_{2i-1}}, \widehat{\delta \mathbf{k}_{2i}}$}
	
	\Let{$\begin{bmatrix}
	\mathbf{z}_{2i-1} \\
	\mathbf{z}_{2i}
	\end{bmatrix}$}{$\begin{bmatrix}
	-\mathbf{f}^{(\ell)}_{2i-1} \\
	-\mathbf{f}^{(\ell)}_{2i}
	\end{bmatrix}$}\Comment{RHS of equations $2i-1$ and $2i$}

	\Statex{\hspace{6ex} // Subtract previously computed solutions to RHS}
	\If{$i < s/2$}
	\Let{$\begin{bmatrix}
	\mathbf{z}_{2i-1} \\
	\mathbf{z}_{2i}
	\end{bmatrix}$}{$\begin{bmatrix}
	\mathbf{z}_{2i-1} \\
	\mathbf{z}_{2i}
	\end{bmatrix} -
	\begin{bmatrix}
	{\cal U}_{i,i+1} & \cdots & 	{\cal U}_{i,s/2}
	\end{bmatrix}
	\begin{bmatrix}
	\begin{bmatrix}
	\delta \mathbf{k}_{2i+1} \\
	\delta \mathbf{k}_{2i+2}
	\end{bmatrix} \\
	\vdots \\
	\begin{bmatrix}
	\delta \mathbf{k}_{s-1} \\
	\delta \mathbf{k}_{s}
	\end{bmatrix}
	\end{bmatrix}$}
	\EndIf

	\Statex{\hspace{6ex} // Solve $2 \times 2$ system on diagonal}
	\Let{$\begin{bmatrix}\delta \mathbf{k}_{2i} \\ \delta \mathbf{k}_{2i-1}\end{bmatrix}$}{krylov$\left({\cal D}_i, \,
	\begin{bmatrix}\mathbf{z}_{2i} \\ \mathbf{z}_{2i-1}\end{bmatrix},
	\, L_{P_i} \right)$} \label{alg:krylov}
		\EndFor

	\vspace{1ex}
	\Let{$\delta \mathbf{k}$}{$(Q_0 R_0 \otimes I) \delta \mathbf{k}$}\Comment{Scale solution by inverse of $R_0^{-1} Q_0^T \otimes I$}
	\Let{$\mathbf{k}^{(\ell+1)}$}{$\mathbf{k}^{(\ell)} + \delta \mathbf{k}$}\Comment{Update stage vectors}
	\Let{$\ell$}{$\ell + 1$}\Comment{Update nonlinear iteration index}
	\EndWhile

		\vspace{1ex}
    \Statex{// Nonlinear iteration has converged}
        \Let{$\mathbf{k}$}{$\mathbf{k}^{(\ell+1)}$}\Comment{Accept Newton solution}
    \Let{$\mathbf{u}_{n+1}$}{$\mathbf{u}_n + \delta t \sum \limits_{i=1}^s b_i \mathbf{k}_i$}
        \Comment{IRK solution at $t_{n+1}$ using \eqref{eq:update}}
	  \end{algorithmic}
\end{algorithm}

}

\section{Differential algebraic equations}\label{sec:dae}

This section considers differential algebraic equations (DAEs) that result from
the spatial discretization of a time-dependent PDE with an algebraic
(non-time-dependent) constraint. DAEs account for many interesting physical
problems, with obvious examples including the many variations in incompressible
flow that arise in fluid dynamics and plasma physics. Special treatment is also
required for the time integration of DAEs, and this section discusses how to
extend methods developed in this paper to DAEs.

DAEs arising from numerical PDEs take the general form
\begin{align}\label{eq:dae}
\begin{split}
M\mathbf{u}_t & = \mathcal{N}(\mathbf{u},\mathbf{w},t), \\
\mathbf{0} & = \mathcal{G}(\mathbf{u},\mathbf{w},t),
\end{split}
\end{align}
where $M$ is a mass matrix and $\mathcal{N}$ and $\mathcal{G}$ nonlinear
functions of the time-dependent variable, $\mathbf{u}$, the constraint
variable, $\mathbf{w}$, and time.
Time propagation using Runge-Kutta integration then takes a similar form to
\eqref{eq:update}, where
\begin{align*}
\mathbf{u}_{n+1} = \mathbf{u}_n + \delta t\sum_{i=1}^s b_i\mathbf{k}_i, \hspace{5ex}
\mathbf{w}_{n+1} = \mathbf{w}_n + \delta t\sum_{i=1}^s b_i\boldsymbol{\ell}_i,
\end{align*}
and stage vectors $\{\mathbf{k}_i\}$ and $\{\boldsymbol{\ell}_i\}$ are
given as the solution of the nonlinear set of equations
\cite[Ch. 4]{brenan1995numerical}
\begin{align}\label{eq:dae_stage}
\begin{split}
\mathcal{N}_i &\coloneqq M\mathbf{k}_i -
	\mathcal{N} \left (\mathbf{u}_{n} + \delta t\sum_{j=1}^s a_{ij}\mathbf{k}_j,
	\mathbf{w}_{n} + \delta t\sum_{j=1}^s a_{ij}\boldsymbol{\ell}_j, t_{n} + c_i\delta t\right)
	= \mathbf{0}, \\
\mathcal{G}_i &\coloneqq - \mathcal{G}\left (\mathbf{u}_{n} + \delta t\sum_{j=1}^s a_{ij}\mathbf{k}_j,
	\mathbf{w}_{n} + \delta t\sum_{j=1}^s a_{ij}\boldsymbol{\ell}_j, t_{n} + c_i\delta t\right)
	= \mathbf{0}.
\end{split}
\end{align}

\textbf{The linear case:} To start, consider a
linear set of DAEs, where \eqref{eq:dae} can be expressed as the linear set of
equations
\begin{align}\label{eq:dae_lin}
\begin{bmatrix} M\mathbf{u}_t \\ \mathbf{0} \end{bmatrix}
& = \begin{bmatrix} \mathcal{L}_u & \mathcal{L}_w \\
	\mathcal{G}_u & \mathcal{G}_w\end{bmatrix}
	\begin{bmatrix} \mathbf{u} \\ \mathbf{w} \end{bmatrix} +
		\begin{bmatrix}\mathbf{f}(t) \\ \mathbf{g}(t)\end{bmatrix}.
\end{align}
Then, the equations defining stage vectors \eqref{eq:dae_stage} can
be expressed as a large block linear system,
{\small
\begin{align}\label{eq:daestage_lin}
\left( \begin{bmatrix} \begin{bmatrix} M \\ & \mathbf{0}\end{bmatrix} & &
	\mathbf{0} \\ & \ddots \\ \mathbf{0} & & \begin{bmatrix} M \\ & \mathbf{0}\end{bmatrix}
		\end{bmatrix}
	- \delta t \begin{bmatrix}
		a_{11}\begin{bmatrix} \mathcal{L}_{u} & \mathcal{L}_{w} \\
			\mathcal{G}_{u} & \mathcal{G}_w \end{bmatrix} & ... & a_{1s}
		\begin{bmatrix} \mathcal{L}_{u} & \mathcal{L}_{w} \\ \mathcal{G}_{u} & \mathcal{G}_w
		\end{bmatrix} \\
		\vdots & \ddots & \vdots \\
		a_{s1}\begin{bmatrix} \mathcal{L}_{u} & \mathcal{L}_{w} \\
			\mathcal{G}_{u} & \mathcal{G}_w \end{bmatrix}
		& ... & a_{ss} \begin{bmatrix} \mathcal{L}_{u} & \mathcal{L}_{w} \\
			\mathcal{G}_{u} & \mathcal{G}_w \end{bmatrix}
	\end{bmatrix} \right)
	\begin{bmatrix} \mathbf{k}_1 \\ \boldsymbol{\ell}_1 \\ \vdots \\
		\mathbf{k}_s \\ \boldsymbol{\ell}_s\end{bmatrix}
& = \begin{bmatrix} \mathbf{f}_1 \\ \mathbf{g}_1 \\ \vdots \\
	\mathbf{f}_s \\ \mathbf{g}_s \end{bmatrix},
\end{align}
}where $\mathbf{f}_i = ( \mathbf{f}(t_i + c_i\delta t) + \mathcal{L}_u\mathbf{u}_n
+ \mathcal{L}_w\mathbf{w}_n) $ and $\mathbf{g}_i = (\mathbf{g}(t_i + c_i\delta t) +
\mathcal{G}_u\mathbf{u}_n + \mathcal{G}_w\mathbf{w}_n)$. In this case,
\eqref{eq:daestage_lin} can be reduced to the Kronecker-product form
\begin{align*}
\left( I\otimes \begin{bmatrix} M \\ & \mathbf{0}\end{bmatrix}
	- \delta t A_0\otimes
		\begin{bmatrix} \mathcal{L}_{u} & \mathcal{L}_{w} \\
			\mathcal{G}_{u} & \mathcal{G}_w \end{bmatrix}\right)
	\mathbf{K}
& = \mathbf{F}.
\end{align*}

\textbf{The nonlinear case:} Now consider general nonlinear DAEs \eqref{eq:dae}
that arise in the context of numerical PDEs. Linearizing \eqref{eq:dae_stage}
results in a linear set of equations similar to \eqref{eq:daestage_lin}, but
with linearized operator that depends on stages. Similar to the nonlinear ODE
case (see \Cref{sec:intro:irk}), it is generally the case that the $2\times 2$
linearized operator is fixed for a given stage (i.e., block row of the matrix),
a natural result of the chain rule applied to \eqref{eq:dae_stage}. Pulling out
$A_0\otimes I$ as in the ODE setting yields a block linear system of the form
{\small
\begin{align}\label{eq:daestage_nonlin}
\left( A_0^{-1}\otimes \begin{bmatrix}M & \mathbf{0} \\ \mathbf{0} & \mathbf{0}\end{bmatrix}
	- \delta t\begin{bmatrix}
		\begin{bmatrix} \mathcal{L}_{u}^{(1)} & \mathcal{L}_{w}^{(1)} \\
			\mathcal{G}_{u}^{(1)} & \mathcal{G}_w^{(1)} \end{bmatrix} & & \mathbf{0} \\
		& \ddots & \\
		\mathbf{0} &&\begin{bmatrix} \mathcal{L}_{u}^{(s)} & \mathcal{L}_{w}^{(s)} \\
			\mathcal{G}_{u}^{(s)} & \mathcal{G}_w^{(s)} \end{bmatrix}
	\end{bmatrix} \right)
	(A_0 \otimes I)
	\begin{bmatrix} \mathbf{k}_1 \\ \boldsymbol{\ell}_1 \\ \vdots \\
		\mathbf{k}_s \\ \boldsymbol{\ell}_s\end{bmatrix}
& = \begin{bmatrix} \mathbf{f}_1 \\ \mathbf{g}_1 \\ \vdots \\
	\mathbf{f}_s \\ \mathbf{g}_s \end{bmatrix}.
\end{align}
}Inverting \eqref{eq:daestage_nonlin} corresponds to the application of
$\mathcal{P}^{-1}$ in the nonlinear Richardson iteration \eqref{eq:non_rich}
applied to solving the nonlinear stage equations \eqref{eq:dae_stage}. Note,
in a nonlinear iteration, the operator in \eqref{eq:daestage_nonlin} is
usually updated each iteration to reflect the latest nonlinear iterate.

\textbf{Solving linear systems:}
Now, techniques developed in \Cref{sec:nonlinear} can be applied to solve
or approximate \eqref{eq:daestage_nonlin} as a single step in the larger
nonlinear iteration to solve \eqref{eq:dae_stage}. For DAEs, the $2\times 2$
block systems that arise after applying the real Schur decomposition (as
discussed in \Cref{sec:theory}) are now $4\times 4$ systems of
the form
\begin{align}\label{eq:dae_block}
\begin{bmatrix} \eta M - \delta t\mathcal{L}_{u}^{(i)} & -\delta t\mathcal{L}_{w}^{(i)}
		& \phi M & \mathbf{0} \\
	-\delta t\mathcal{G}_{u}^{(i)} & -\delta t\mathcal{G}_w^{(i)}
		& \mathbf{0} & \mathbf{0} \\
	-\tfrac{\beta^2}{\phi}M & \mathbf{0} & \eta M - \delta t\mathcal{L}_{u}^{(i+1)} &
		-\delta t\mathcal{L}_{w}^{(i+1)} \\
	\mathbf{0} & \mathbf{0} & -\delta t\mathcal{G}_{u}^{(i+1)} &
		-\delta t\mathcal{G}_w^{(i+1)} \end{bmatrix}
	\begin{bmatrix} \mathbf{k}_{i} \\ \boldsymbol{\ell}_{i} \\
		 \mathbf{k}_{i+1} \\ \boldsymbol{\ell}_{i+1} \end{bmatrix}
	= 	\begin{bmatrix} \mathbf{f}_{i} \\ \mathbf{g}_{i} \\
		 \mathbf{f}_{i+1} \\ \mathbf{g}_{i+1} \end{bmatrix}.
\end{align}

For index-1 DAEs, where the algebraic constraint can be formally eliminated from
the problem (although it is often not practical to do so), \Cref{ass:fov}
naturally applies to the reduced time-dependent problem. Then, the block preconditioning
techniques and theory developed in \Cref{sec:theory} can be formally applied
when the algebraic constraint is inverted to high accuracy within each
preconditioner application. Inexact application of the constraint makes
\Cref{ass:fov} less certain, but for index-1 DAEs we expect the methods
developed here to remain effective with approximate inner inverses.

In the more general setting, such as index-2 DAEs, preconditioning
\eqref{eq:dae_block} and the corresponding Schur complement requires more
problem-specific analysis than the theory developed for ODEs in
\Cref{sec:theory}. In particular, \Cref{ass:fov} does not necessarily hold for
the larger linear system that includes time-dependent variables and constraints
(the obvious example being indefinite saddle-point systems that often arise in
incompressible fluid dynamics). However, \Cref{sec:numerics:khi} considers a
Picard iteration of incompressible Navier Stokes in vorticity-stream-function
form (an index-1 DAE), where \eqref{eq:dae_block} can be reordered to be block
triangular, and the theory and preconditioning developed in \Cref{sec:theory}
can be applied directly to the leading $2\times 2$ block representing
time-dependent variables ($\mathbf{k}_i$ and $\mathbf{k}_{i+1}$).

\section{Numerical results}\label{sec:numerics}

In this section, we apply the solvers and preconditioners developed above to several fluid flow problems.
The solvers and spatial discretizations were implemented using the MFEM finite element library \cite{Anderson2020}.
{All numerical results will use the constant $\gamma = \gamma_*$ \eqref{eq:gamma*} unless
otherwise specified.}

\subsection{Compressible Euler \& Navier--Stokes equations}\label{sec:numerics:ns}

Consider the compressible Navier--Stokes equations, given by
\begin{align}
    \label{eq:ns-1}
	 \frac{\partial\rho}{\partial t} + \frac{\partial}{\partial x_j}(\rho u_j)
	 	&= 0, \\
    \label{eq:ns-2}
    \frac{\partial}{\partial t}(\rho u_i) + \frac{\partial}{\partial x_j} (\rho u_i u_j) + \frac{\partial p}{\partial x_i}
      &= \frac{\partial \tau_{ij}}{\partial x_j} \qquad \text{for $i=1,2,3,$}\\
    \label{eq:ns-3}
	 \frac{\partial}{\partial t}(\rho E) + \frac{\partial}{\partial x_j} \left(u_j(\rho E + p) \right)
	 	&= -\frac{\partial q_j}{\partial x_j} + \frac{\partial}{\partial x_j} (u_i \tau_{ij}),
\end{align}
using the convention that repeated indices are implicitly summed.
In the above, $\rho$ is the density, $u_i$ is the $i$th component of the velocity, and $E$ is the total energy.
The viscous stress tensor and heat flux are given by
\begin{equation}
	\tau_{ij} = \mu\left(
		\frac{\partial u_i}{\partial x_j} + \frac{\partial u_j}{\partial x_i}
		- \frac{2}{3} \frac{\partial u_k}{\partial x_k}  \delta_{ij}
	\right)
	\hspace{1.5ex}\text{and}\hspace{1.5ex}
	q_j = - \frac{\mu}{\mathrm{Pr}} \frac{\partial}{\partial x_j} \left(E + \frac{p}{\rho} - \frac{1}{2} u_k u_k \right),
\end{equation}
where $\mu$ is the viscosity coefficient, and $\mathrm{Pr}$ is the Prandtl number.
We assume that the pressure $p$ is given by the equation of state
$p = (\gamma - 1)\rho \left( E - \frac{1}{2} u_k u_k \right)$,
where $\gamma = 1.4$ is the adiabatic gas constant.
We obtain the compressible Euler equations from equations (\ref{eq:ns-1}--\ref{eq:ns-3}) by setting the viscosity coefficient $\mu = 0$.
For the viscous problems, we introduce an additional isentropic assumption of the form $p = K \rho^\gamma,$ for a given constant $K$.
This simplification is described in \cite{Kanner2015} and results in a reduced system of equtions.
\subsubsection{Isentropic Euler vortex}
For a first test case, we consider the model problem of an inviscid isentropic
vortex \cite{Shu1998,Wang2013}. The spatial domain is $\Omega = [0,20] \times
[-7.5,7.5]$. The vortex, initially centered at $(x_0, y_0)$, is advected with
the freestream velocity at an angle of $\theta$. The exact solution for this
problem is given analytically by
{\small
\begin{align*}
   u = u_\infty \left( \cos(\theta) - \frac{\epsilon ((y-y_0)
       - \overline{v} t)}{2\pi r_c}
       \mathrm{e}^{\tfrac{f(x,y,t)}{2}}\right),\hspace{1ex}
   \rho = \rho_\infty \left( 1 -
       \frac{\epsilon^2 (\gamma - 1)M^2_\infty}{8\pi^2} \mathrm{e}^{f(x,y,t)}
       \right)^{\frac{1}{\gamma-1}}, \\
   v = u_\infty \left( \sin(\theta) - \frac{\epsilon ((x-x_0)
       - \overline{u} t)}{2\pi r_c}
       \mathrm{e}^{\tfrac{f(x,y,t)}{2}} \right),\hspace{1ex}
   p = p_\infty \left( 1 -
       \frac{\epsilon^2 (\gamma - 1)M^2_\infty}{8\pi^2} \mathrm{e}^{f(x,y,t)}
       \right)^{\frac{\gamma}{\gamma-1}}.
\end{align*}
}In the above, $f(x,y,t) = (1 - ((x-x_0) - \overline{u}t)^2 - ((y-y_0) - \overline{v}t)^2)/r_c^2$, and $M_\infty, \rho_\infty,$ and $p_\infty$ are the freestream Mach number, density, and pressure, respectively.
The freestream velocity is given by $(\overline{u},\overline{v}) = u_\infty (\cos(\theta), \sin(\theta))$.
The parameters for this test case are given by $\epsilon = 15$, $r_c = 1.5$, $M_\infty = 0.5$, $\theta = \arctan(1/2)$, $u_\infty = 1$, $\rho_\infty = 1$.
We discretize this problem using a high-order DG method with Roe numerical fluxes \cite{Roe1981}.
The spatial domain is discretized with a structured triangular mesh with 532 elements.
The DG finite element space is chosen to use piecewise polynomials of degree 4.

We first verify the temporal order of accuracy by fixing the spatial discretization  computing a baseline solution using an explicit fourth-order Runge--Kutta method with $\delta t = 5\times 10^{-5}$.
Then, the solutions computed using the implicit Runge--Kutta methods are compared to the baseline solution to estimate the observed order of convergence for these problems.
The results are presented in Table \ref{tab:ev-errors}.
The expected rates of convergence are observed for Gauss, Radau, and Lobatto methods, of orders 2 through 7.

\begin{table}
	\centering
	\caption{Error and convergence rates for Euler vortex problem.}
	\label{tab:ev-errors}
	\begin{tabular}{r|cccccc}
		\toprule
		& \multicolumn{2}{c}{Gauss 2} & \multicolumn{2}{c}{Gauss 4} & \multicolumn{2}{c}{Gauss 6}\\
		$\delta t$ & Error & Rate & Error & Rate & Error & Rate\\
		\midrule
		$2.50\times 10^{-2}$ & $5.89 \times 10^{-3}$ & --- & $5.29 \times 10^{-4}$ & --- & $1.65 \times 10^{-5}$ & --- \\
		$1.25\times 10^{-2}$ & $1.18 \times 10^{-3}$ & 2.32 & $2.75 \times 10^{-5}$ & 4.26 & $2.35 \times 10^{-7}$ & 6.14 \\
		$6.25\times 10^{-3}$ & $2.82 \times 10^{-4}$ & 2.07 & $1.64 \times 10^{-6}$ & 4.07 & $3.69 \times 10^{-9}$ & 5.99 \\
		\midrule
		& \multicolumn{2}{c}{Radau 3} & \multicolumn{2}{c}{Radau 5} & \multicolumn{2}{c}{Radau 7}\\
		\midrule
		$2.50\times 10^{-2}$ & $1.19 \times 10^{-3}$ & --- & $8.48 \times 10^{-5}$ & --- & $1.92 \times 10^{-6}$ & --- \\
		$1.25\times 10^{-2}$ & $1.62 \times 10^{-4}$ & 2.88 & $2.78 \times 10^{-6}$ & 4.92 & $1.68 \times 10^{-8}$ & 6.84 \\
		$6.25\times 10^{-3}$ & $2.17 \times 10^{-5}$ & 2.90 & $9.23 \times 10^{-8}$ & 4.91 & $2.16 \times 10^{-10}$ & 6.28 \\
		\midrule
		& \multicolumn{2}{c}{Lobatto 2} & \multicolumn{2}{c}{Lobatto 4} & \multicolumn{2}{c}{Lobatto 6}\\
		\midrule
		$2.50\times 10^{-2}$ & $2.45 \times 10^{-3}$ & --- & $2.76 \times 10^{-4}$ & --- & $1.30 \times 10^{-5}$ & --- \\
		$1.25\times 10^{-2}$ & $1.12 \times 10^{-3}$ & 1.13 & $2.39 \times 10^{-5}$ & 3.53 & $2.52 \times 10^{-7}$ & 5.69 \\
		$6.25\times 10^{-3}$ & $3.93 \times 10^{-4}$ & 1.88 & $2.39 \times 10^{-6}$ & 3.67 & $4.42 \times 10^{-9}$ & 5.83 \\
		\bottomrule
	\end{tabular}
\end{table}

We next study the effectiveness of the solvers and preconditioners for the
resulting algebraic systems of equations. We make use of an element-wise block
ILU preconditioner with minimum discarded fill ordering that has been shown to
be effective for convection-dominated fluid problems \cite{Persson2008}. In
Table \ref{tab:ev-solvers}, we present the number of nonlinear iterations
required to converge with a representative time step of $\delta t = 2 \times
10^{-2}$, together with the total number of preconditioner applications in one
step. In these tests, a relative nonlinear tolerance of $10^{-9}$ was used, and
each linear system was solved using GMRES with a relative tolerance of
$10^{-5}$. Each Krylov iteration for the SDIRK methods corresponds to a single
preconditioner application. For the fully implicit IRK methods, one Krylov
iteration for a $1\times1$ system corresponds to one preconditioner application,
whereas for a $2\times2$ system, one Krylov iteration corresponds to two
preconditioner applications. We note that the second- and fourth-order Gauss
methods require fewer total preconditioner applications when compared with the
equal-order SDIRK methods. Similarly, the third-order Radau IIA method requires
one fewer preconditioner application when compared with the third-order SDIRK
method. The Lobatto methods are significantly more expensive than the
equal-order Gauss methods for this test case.

\begin{table}
	\centering
	\caption{Convergence results for Euler vortex test case, showing Newton iterations required for a single time step with $\delta t = 2\times10^{-2}$, and total preconditioner applications per time step.
	}
	\label{tab:ev-solvers}
	\begin{tabular}{r|cccc|ccccc}
		\toprule
		& \multicolumn{4}{c|}{SDIRK} & \multicolumn{5}{c}{Gauss} \\
		Order  & 1 & 2 & 3 & 4 & 2 & 4 & 6 & 8 & 10\\
		\midrule
		Newton its. & 3 & 3 & 3 & 3 & 3 & 3 & 5 & 5 & 5\\
		\midrule
		Precond.\ applications & 20 & 26 & 45 & 59 & 15 & 36 & 103 & 162 & 169\\
		\bottomrule
	\end{tabular}

	\vspace{\floatsep}

	\begin{tabular}{r|cccc|cccc}
		\toprule
		& \multicolumn{4}{c|}{Radau} & \multicolumn{4}{c}{Lobatto} \\
		Order  & 3 & 5 & 7 & 9 & 2 & 4 & 6 & 8\\
		\midrule
		Newton its. & 3 & 5 & 5 & 5 & 3 & 8 & 5 & 6\\
		\midrule
		Precond.\ applications & 44 & 121 & 168 & 205 & 66 & 225 & 210 & 292\\
		\bottomrule
	\end{tabular}
\end{table}

Finally, in \Cref{tab:ev-gamma}
we study the effect of the choice of the coefficient $\gamma$ appearing
in the linear preconditioner \eqref{eq:Lprec}. We compare the naive choice of
$\gamma = \eta$ to the improved choice of $\gamma = \gamma_*$, where $\gamma_*$
is as in \Cref{col:cond_L1=L2}. This choice is shown to be optimal in the case
where $\cL_1 = \cL_2$. Although this assumption does not hold in this case
because the equations are fully nonlinear, we still observe significantly
improved iteration counts with this choice of $\gamma$, consistent with
\Cref{th:cond}.

{
\renewcommand{\tabcolsep}{4pt}
\begin{table}\label{tab:ev-gamma}
	\centering
	\caption{Convergence results for Euler vortex test case. Average Krylov iterations are shown for $2 \times 2$ systems, denoted ``Kry.''.}
	\begin{tabular}{r|cccc|cccc|cccc}
		\toprule
		& \multicolumn{4}{c|}{Gauss} & \multicolumn{4}{c|}{Radau} & \multicolumn{4}{c}{Lobatto}\\
		Order & 4 & 6 & 8 & 10 & 3 & 5 & 7 & 9 & 2 & 4 & 6 & 8 \\
		\midrule
		Newton its. & 3 & 5 & 5 & 5 & 3 & 5 & 5 & 5 & 3 & 8 & 5 & 6 \\
		\midrule
		\makecell[r]{Kry. ($\gamma = \eta$)}
		& 7.7 & 12.6 & 12.5 & 14.9 & 9.0 & 16.8 & 15.5 & 20.6 & 13.7 & 24.4 & 25.3 & 35.2 \\
		\makecell[r]{Kry. ($\gamma = \gamma_*$)}
		& 6.0 & 8.4 & 8.1 & 7.6 & 7.3 & 10.2 & 8.4 & 9.3 & 11.0 & 12.0 & 10.5 & 11.2 \\
		\bottomrule
	\end{tabular}
\end{table}
}

\subsubsection{Flow over NACA airfoil}

\begin{figure}
	\centering
	\setlength{\fboxsep}{0pt}
	\fbox{\includegraphics[width=2in]{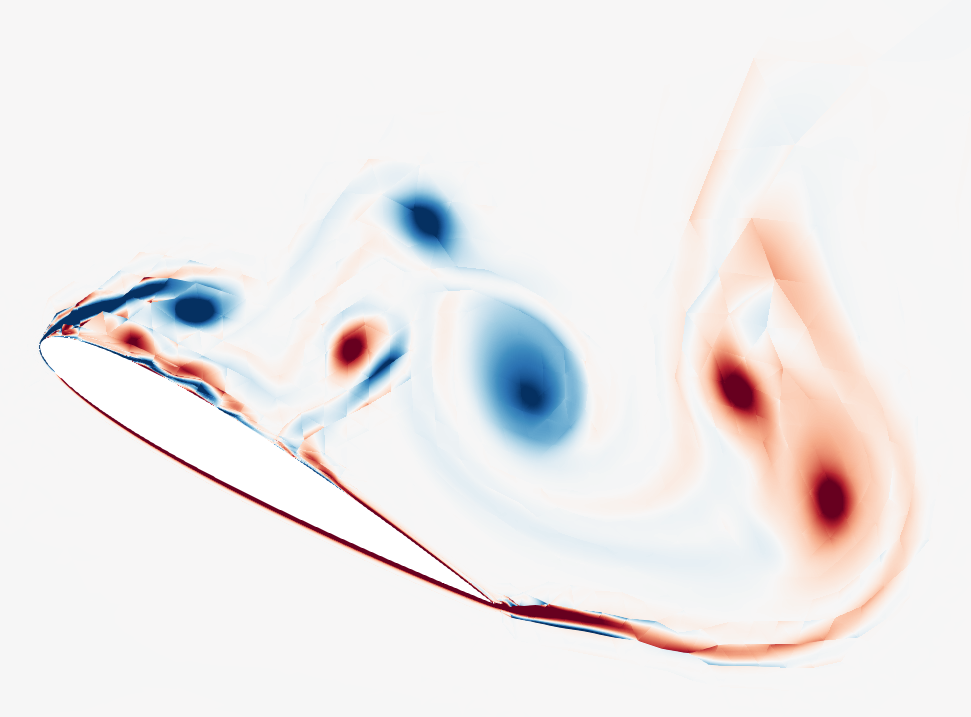}}
	\caption{Snapshot of vorticity for Reynolds 40{,}000 flow over NACA airfoil.}
	\vspace{-4ex}
\end{figure}

As a more challenging test case, we consider the Reynolds number 40{,}000 flow
over a NACA0012 airfoil. The angle of attack is $30^\circ$ and the farfield Mach
number is $0.1$. The domain is discretized using a triangular mesh with 3154
elements, and the spatial discretization is a high-order discontinuous Galerkin
method using compact stencils for the second order (viscous) terms with
polynomial degree $p=3$ \cite{Peraire2008}. No-slip boundary conditions are
enforced at the surface of the airfoil, and farfield boundary conditions at all
other domain boundaries. The main challenge associated with this problem is the
resolution of the thin boundary layer at the surface of the airfoil that results
from the no-slip condition. This boundary layer is resolved using a layer of
anisotropically stretched elements near the surface of the airfoil. These
elements result in a highly restrictive CFL stability condition, motivating the
use of implicit time integration for this problem. A time accurate time step of
$\delta t = 5 \times 10^{-2}$ is chosen for this problem. This time step is
several orders of magnitude larger than the largest stable explicit time step.
The number of nonlinear iterations and preconditioner applications required for
convergence are shown in Table \ref{tab:naca-iters}. The nonlinear tolerance was
chosen to be $10^{-9}$, and each linear system was solved using GMRES with a
relative tolerance of $10^{-5}$. As in the previous case, each Krylov iteration
for the SDIRK methods corresponds to a single preconditioner application. For
the IRK methods, one Krylov iteration for a $1\times1$ system corresponds to one
preconditioner application, whereas for a $2\times2$ system, one Krylov
iteration corresponds to two preconditioner applications. As we observed in the
case of the Euler vortex, the Gauss and Radau fully implicit Runge--Kutta
methods of 2, 3, and 4 converge with fewer total preconditioner applications
than the equal-order SDIRK method.

Additionally, we use this test case to compare four potential solver strategies,
corresponding to those enumerated in \Cref{sec:nonlinear:gen}. The first solver
(Solver 0) uses a simplified Newton nonlinear iteration, where the Jacobian
matrix from the first stage is used for all stages. This has the advantage that
the number of Jacobian matrix assemblies per nonlinear iteration is reduced;
however, in general, the quadratic convergence of Newton's method is not
maintained, typically resulting in an increased number of nonlinear iterations.
The remaining solvers (Solvers 1, 2, and 3) use exactly computed Jacobian
matrices at all temporal stages, and each solver corresponds to a different
approximation $\widetilde{P} \approx \widehat{P}$, as described in
\Cref{sec:nonlinear:gen}. With increasing quality of the approximation, we
expect the solver to converge more rapidly, however each iteration will
generally be more expensive to compute. In \Cref{fig:naca-nonlin-solvers} we
compare the number of nonlinear iterations, number of matrix-vector products
(determined by the convergence of the Krylov solvers), number of Jacobian
assemblies, and total wall-clock runtime for these solver configurations
{(runtimes are measured using a Linux workstation with 16 Intel Xeon Gold
2.10 GHz CPUs and 124 GB memory)}. From these results, we see that for this
problem, the nonlinear iterations based on better approximations lead to overall
faster runtimes, despite the higher per-iteration cost. However, we note that
this performance is often problem-dependent. In particular, for smaller time
steps and less stiff problems, the simplified Newton method can be more
efficient because few Jacobian assemblies are required, and the increase in
nonlinear iterations over Solvers 1, 2, and 3 is typically less significant.

\begin{table}
	\centering
	\caption{Nonlinear iterations and preconditioner applications for the NACA airfoil test case, with time step $\delta t = 5 \times 10^{-2}$, using Newton-like Solver 3 with a relative tolerance of $10^{-9}$.}
	\label{tab:naca-iters}
	\begin{tabular}{r|cccc|ccccc}
		\toprule
		& \multicolumn{4}{c|}{SDIRK} & \multicolumn{5}{c}{Gauss} \\
		Order  & 1 & 2 & 3 & 4 & 2 & 4 & 6 & 8 & 10\\
		\midrule
		Newton its. & 5 & 5 & 5 & 5 & 5 & 5 & 8 & 8 & 8\\
		\midrule
		Precond.\ applications & 173 & 200 & 359 & 481 & 128 & 244 & 557 & 732 & 830\\
		\bottomrule
	\end{tabular}

	\vspace{\floatsep}

	\begin{tabular}{r|cccc|ccc}
		\toprule
		& \multicolumn{4}{c|}{Radau} & \multicolumn{3}{c}{Lobatto} \\
		Order  & 3 & 5 & 7 & 9 & 2 & 6 & 8\\
		\midrule
		Newton its. & 5 & 9 & 9 & 9 & 5 & 15 & 17\\
		\midrule
		Precond.\ applications & 314 & 728 & 926 & 1061 & 454 & 1670 & 1995\\
		\bottomrule
	\end{tabular}
\end{table}

\begin{figure}
	\centering
	\includegraphics[width=0.495\linewidth]{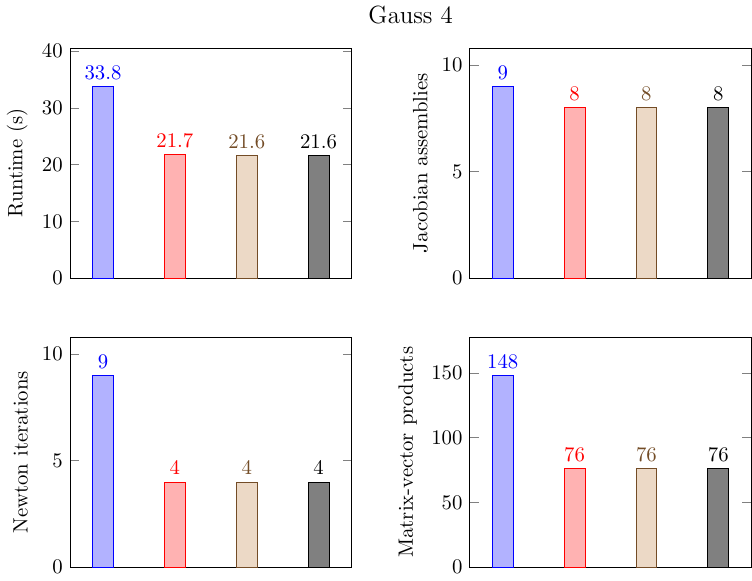}
		\includegraphics[width=0.495\linewidth]{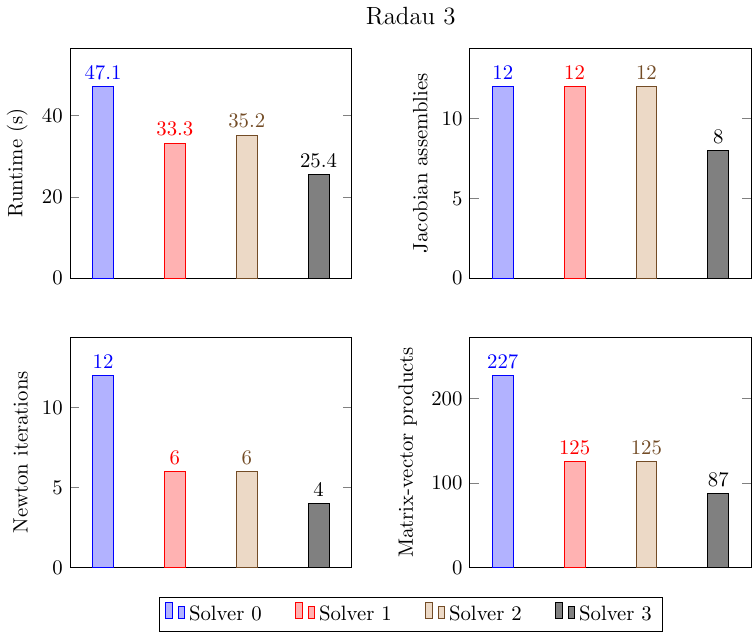}
	\includegraphics[width=0.45\linewidth]{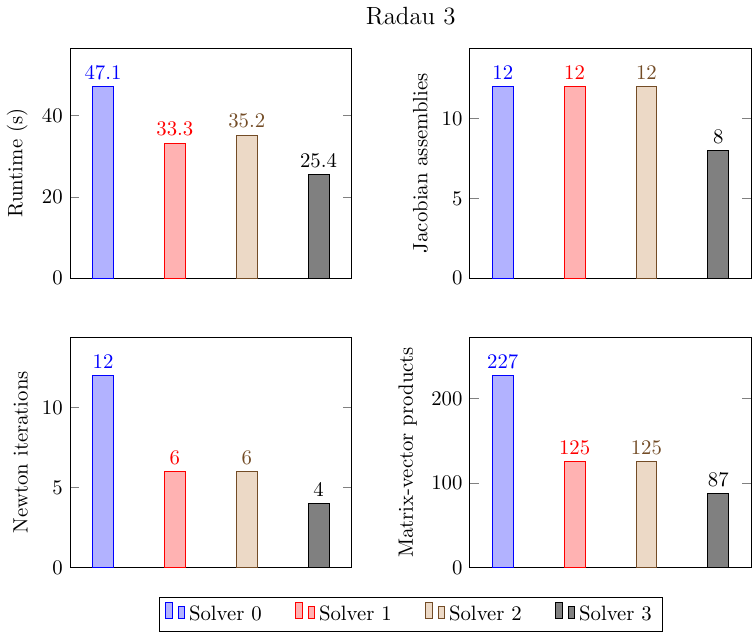}
	\caption{Performance for four solver configurations on the NACA test case with $\delta t = 2 \times 10^{-2}.$}
	\label{fig:naca-nonlin-solvers}
	\vspace{-4ex}
\end{figure}

\subsection{Incompressible Euler \& Navier--Stokes in vorticity-streamfunction form}
\label{sec:numerics:khi}

As an example of an index-1 DAE, we consider the vorticity-streamfunction formulation of the 2D incompressible Euler equations \cite{Liu2000}, given by
\begin{align}
	\label{eq:ins-1}
	\frac{\partial\omega}{\partial t} + \nabla\cdot(\bm{u} \omega) &= 0,
	\hspace{3ex}\textnormal{and}
	\hspace{3ex}
	\Delta \psi = \omega,
\end{align}
where the velocity $\bm u$ is defined by $\bm u = \nabla^\perp \psi$, for $\nabla^\perp = (-\partial_y, \partial_x)$.
Here, $\omega$ is the vorticity, and $\psi$ is a scalar field known as the streamfunction, which is used to naturally enforce the divergence-free constraint on the velocity.
Note that this formulation can be easily extended to the 2D incompressible Navier--Stokes equations with the addition of a viscosity term, replacing left left-hand term of equation \eqref{eq:ins-1} with
$\frac{\partial\omega}{\partial t} + \nabla\cdot(\bm{u} \omega) = \frac{1}{\rm Re} \Delta \omega$,
where $\rm Re$ is the Reynolds number.
For a fixed velocity $\bm u$, the left-hand term in equation \eqref{eq:ins-1} is a scalar advection equation for $\omega$, which we discretize using an upwind discontinuous Galerkin method.
If the streamfunction $\psi$ is in $H^1$, then the velocity $\bm u = \nabla^\perp \psi$ is automatically continuous across element interfaces, and therefore the standard upwind numerical flux is well-defined.
We therefore discretize $\Delta \psi$ using a standard $H^1$-conforming finite element method.
Equal-order finite element spaces are chosen for $\omega$ and $\psi$.
In the case of the Navier--Stokes equations, we discretize the viscous term added to the right-hand side,
$\tfrac{1}{\rm Re} \Delta \omega$, using a standard interior penalty DG method \cite{Arnold1982}.

After performing the discretization, this system of equations can be written as
\begin{equation}
	\label{eq:ins-system}
	\left[ \begin{array}{c} M_{\rm dg} \omega_t \\ 0 \end{array} \right]
	=
	\left[ \begin{array}{cc} K(\psi) & 0 \\ M_{\rm mix} & A \end{array} \right]
	\left[ \begin{array}{c} \omega \\ \psi \end{array} \right],
\end{equation}
where $M_{\rm dg}$ represents the DG mass matrix, $M_{\rm mix}$ is the mixed
DG-$H^1$ mass matrix, $K(\psi)$ is the discretized advection (or
advection--diffusion) operator (depending the velocity $\bm u$ as a function of
$\psi$), and $A$ is the $H^1$-conforming diffusion operator. A Picard
linearization of \eqref{eq:ins-system} will result in a block-triangular system
that is of the same form as \eqref{eq:ins-system}, but using an iteratively
lagged advection operator. We use nonlinear method (1) from
\Cref{sec:nonlinear:gen}, where we lump the sum of operators on diagonal
blocks to the dominant operator and ignore non-identity off-diagonal coupling.
For this problem, tests indicated that including additional diagonal terms or
off-diagonal coupling (as in methods (2) and (3)) requires slightly
longer wall-clock times and do not offer significant reduction in nonlinear
iterations. Then, in the notation of \Cref{sec:dae}, we have $\mathcal{L}_u^{(i)} =
K(\psi^{(i)})$, $\mathcal{L}_w = 0$, $\mathcal{G}_u = M_{\rm mix}$, and
$\mathcal{G}_w = A$. The resulting $4\times4$ block system that arises from IRK
integration has the form
\begin{align} \label{eq:vsf-system-1}
	\begin{bmatrix}
		\eta M_{\rm dg} - \delta t K^{(i)} & \mathbf{0} & \phi M_{\rm dg} & \mathbf{0} \\
		-\delta t M_{\rm mix} & -\delta t A & \mathbf{0} & \mathbf{0} \\
		-\tfrac{\beta^2}{\phi}M_{\rm dg} & \mathbf{0} & \eta M_{\rm dg} - \delta t K^{(i+1)} & \mathbf{0} \\
		\mathbf{0} & \mathbf{0} & -\delta t M_{\rm mix} & -\delta t A
	\end{bmatrix}
	\begin{bmatrix} \bm\omega_i \\ \bm\psi_i \\ \bm\omega_{i+1} \\ \bm\psi_{i+1} \end{bmatrix}
	=
	\begin{bmatrix} \mathbf{f}_i \\ \mathbf{g}_i \\ \mathbf{f}_{i+1} \\ \mathbf{g}_{i+1} \end{bmatrix}.
\end{align}

We consider two types of preconditioners for this system. The first is {the}
block-triangular preconditioner {described in} \Cref{sec:theory}.
{In this case, the Schur complement is approximated using \eqref{eq:schur-approx}, and the diagonal blocks are replaced by the appropriate preconditioners.}
An alternative preconditioner is obtained by noticing that this system can be
reordered to obtain the block-triangular system
\begin{align} \label{eq:vsf-system-2}
	\begin{bmatrix}
		\eta M_{\rm dg} - \delta t K^{(i)} & \phi M_{\rm dg} & \mathbf{0} & \mathbf{0} \\
		-\tfrac{\beta^2}{\phi}M_{\rm dg} & \eta M_{\rm dg} - \delta t K^{(i+1)}  & \mathbf{0}& \mathbf{0} \\
		-\delta t M_{\rm mix} & \mathbf{0} & -\delta t A & \mathbf{0} \\
		\mathbf{0} & -\delta t M_{\rm mix} & \mathbf{0} & -\delta t A
	\end{bmatrix}
	\begin{bmatrix} \bm\omega_i \\ \bm\omega_{i+1} \\ \bm\psi_i \\ \bm\psi_{i+1} \end{bmatrix}
	=
	\begin{bmatrix} \mathbf{f}_i \\ \mathbf{f}_{i+1} \\ \mathbf{g}_i \\ \mathbf{g}_{i+1} \end{bmatrix}.
\end{align}
This block-triangular system can be solved using forward-substitution, first solving the leading $2\times2$ block for the time-dependent variables, and then solving two (independent) Poisson problems for the algebraic constraints (i.e.\ the streamfunctions).

Each of these approaches require preconditioning/inverting the diagonal blocks
in \eqref{eq:vsf-system-1}/\eqref{eq:vsf-system-2}. Poisson problems are solved
with optimal complexity using AMG preconditioners. The advection diffusion
equations defining vorticity are preconditioned using nonsymmetric AMG based on
approximate ideal restriction (AIR) \cite{Manteuffel:2019,Manteuffel:2018}. The
leading $2\times 2$ time-dependent vorticity equations are preconditioned using
the block-triangular preconditioners described in \Cref{sec:theory}, coupled
with AIR preconditioning for individual systems. For the block triangular
variation \eqref{eq:vsf-system-2}, the $2\times2$ diagonal blocks are solved to
high precision, while preconditioning diagonal blocks in \eqref{eq:vsf-system-1}
consists of \textit{one} AIR or AMG iteration. All linear and nonlinear iterations
are solved to relative residual tolerance of $10^{-9}$, typically yielding an
absolute tolerance $\sim\mathcal{O}(10^{-12})$.

\begin{figure}
	\def\dslfigwidth{1.1in}
	\centering
	\setlength{\fboxsep}{0pt}
	\begin{minipage}{\dslfigwidth}
		\centering
		\fbox{\includegraphics[width=0.99\linewidth]{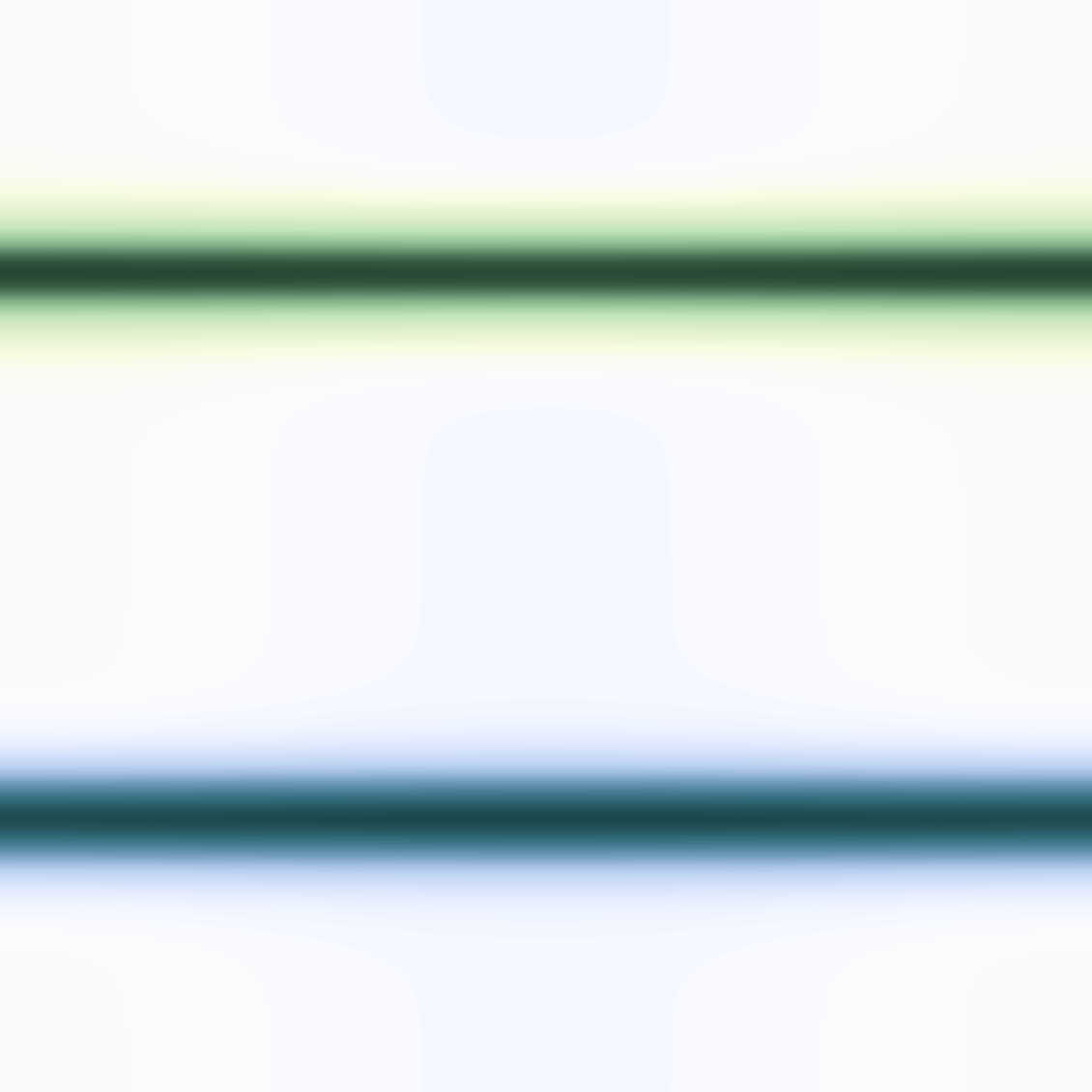}}\\
		$t=0$
	\end{minipage}
	\hspace{0.1in}
	\begin{minipage}{\dslfigwidth}
		\centering
		\fbox{\includegraphics[width=0.99\linewidth]{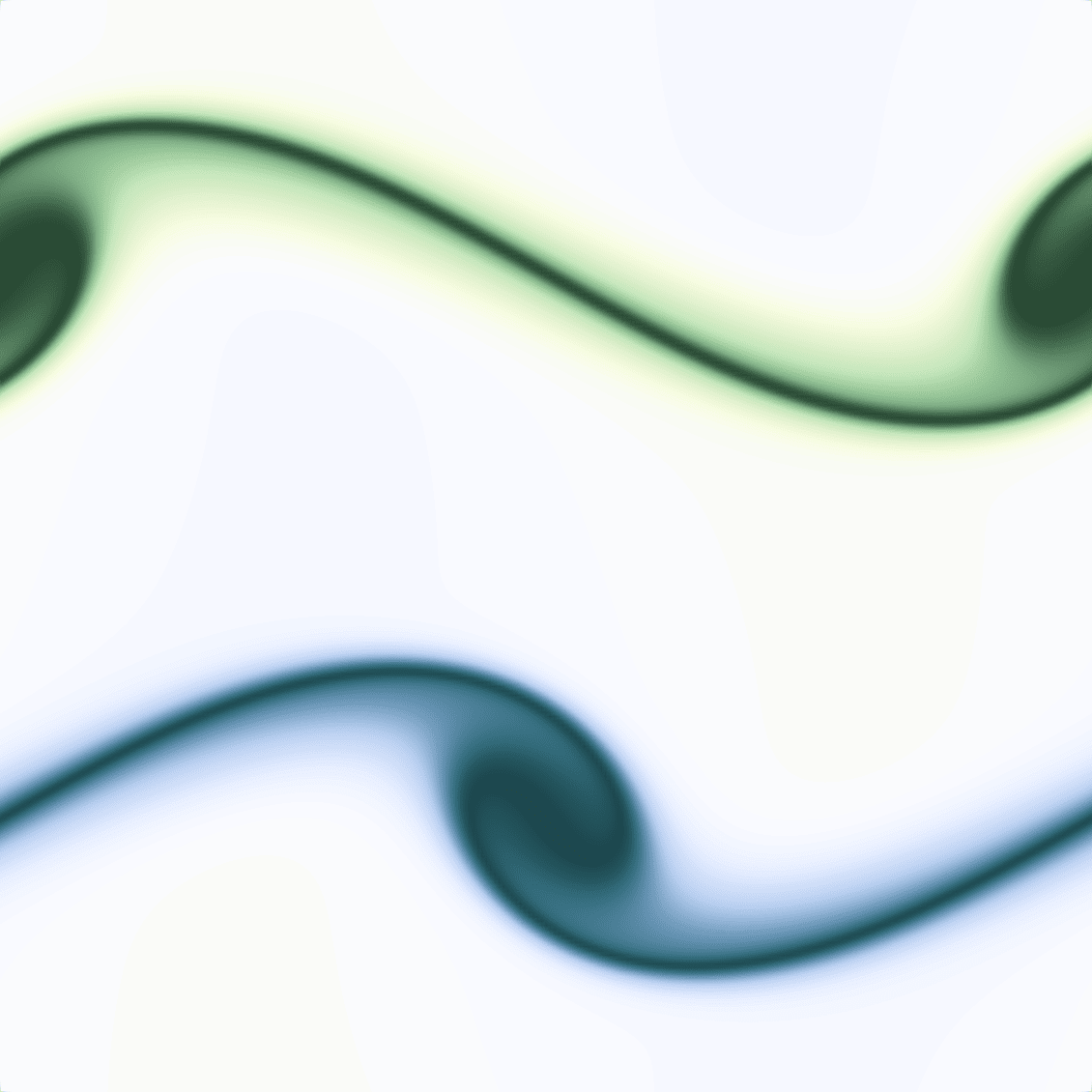}}\\
		$t=5$
	\end{minipage}
		\hspace{0.1in}
	\begin{minipage}{\dslfigwidth}
		\centering
		\fbox{\includegraphics[width=0.99\linewidth]{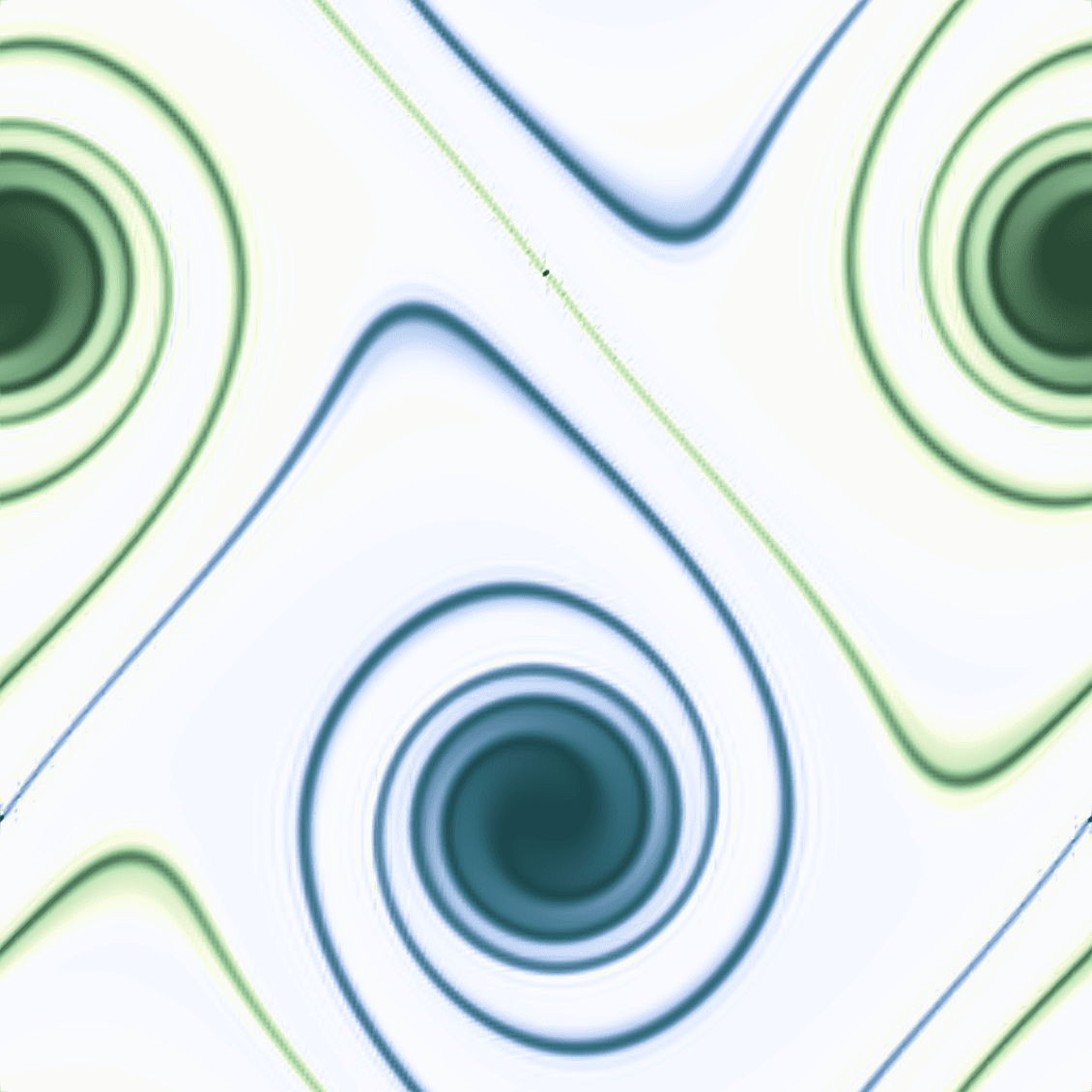}}\\
		$t=10$
	\end{minipage}
	\hspace{0.1in}
	\begin{minipage}{\dslfigwidth}
		\centering
		\fbox{\includegraphics[width=0.99\linewidth]{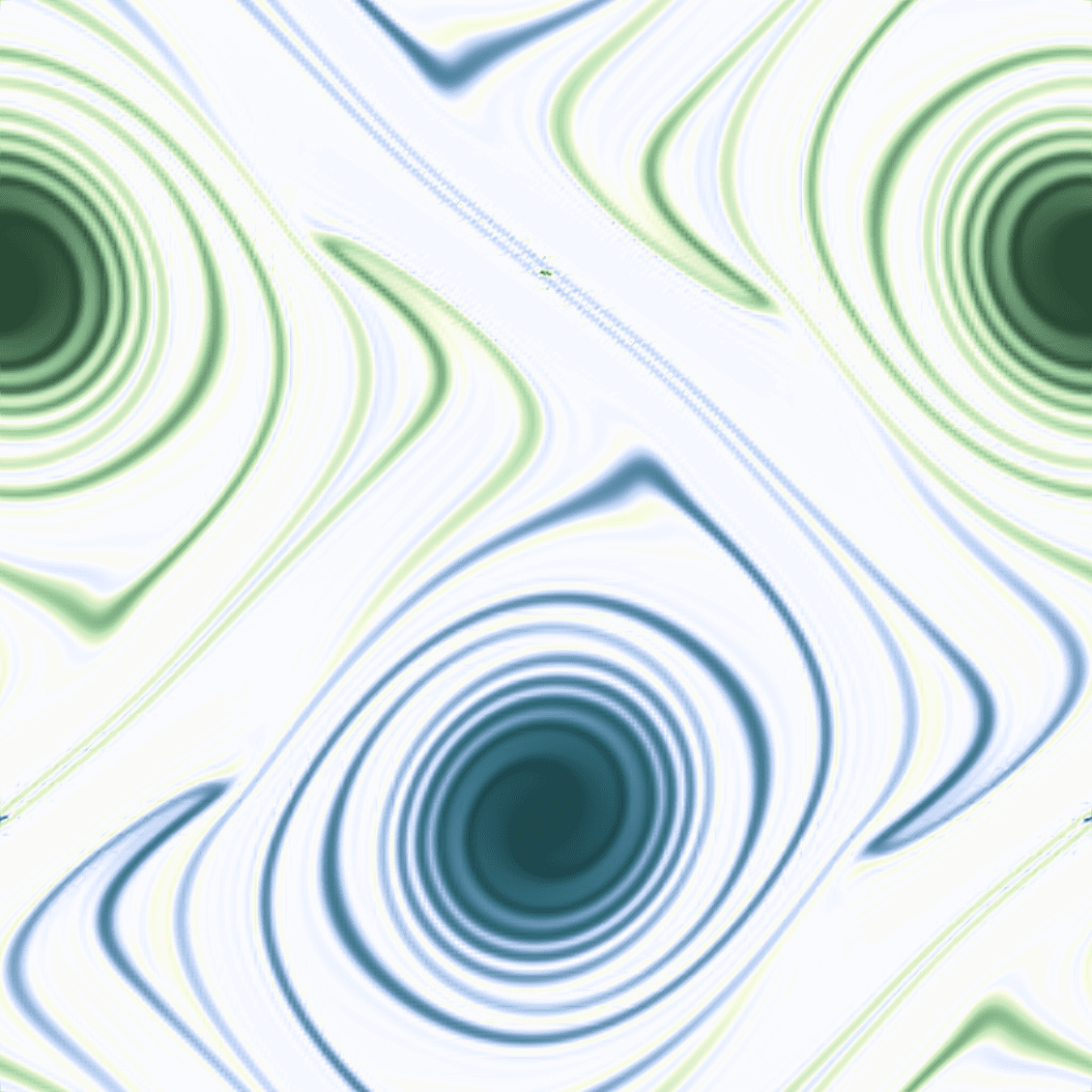}}\\
		$t=15$
	\end{minipage}
	\caption{Time evolution of vorticity for the double shear layer problem.}
	\label{fig:dsl}
	\vspace{-4ex}
\end{figure}

To study the effectiveness of these preconditioners, we consider the double
shear layer problem \cite{Bell1989}. The domain is taken to be the square
$[0,2\pi] \times [0,2\pi]$, and periodic boundary conditions are enforced at the
domain boundaries. The initial condition is given by
\[
	\omega(x,y,0) = \begin{cases}
		\delta \cos(x) - \frac{1}{\rho}\operatorname{sech}^2((y-\pi/2)/\rho) \qquad & y \leq \pi, \\
		\delta \cos(x) + \frac{1}{\rho}\operatorname{sech}^2((3\pi/2 - y) \qquad & y > \pi.
	\end{cases}
\]
This test case is well-suited for high-order methods because the solution
quickly develops small-scale features, as shown in \Cref{fig:dsl}. We use finite
element spaces with polynomial degree $p=3$, mesh spacing $h = 0.0025$, and
choose a time step of $\delta t = 10^{-2}$ for all RK schemes to consider
scalability in integration order for fixed $\delta t$. \Cref{tab:dsl-iters}
shows the total number of preconditioner applications required per time step
with Reynolds number Re  $=10$. Rows indicated ``Prec.\ applications''
correspond to \eqref{eq:vsf-system-1}, and each preconditioner application is
defined as preconditioning a $2\times 2$ block over
$[\boldsymbol{\omega}_i,\boldsymbol{\psi}_i]$ with {one} AIR iteration and one
AMG iteration (one for each diagonal block). The block triangular variation
\eqref{eq:vsf-system-2} does a block forward solve on \eqref{eq:vsf-system-2},
and \Cref{tab:dsl-iters} presents the total number of AIR and AMG iterations
required for the forward solve, summed over all nonlinear iterations. Note,
because the time-dependent and algebraic blocks are solved separately in this
case, the number of AIR iterations (to solve for the vorticity) and AMG
iterations (to solve for the streamfunction) are not equal. These results were
run on 288 cores on the Quartz machine at Lawrence Livermore National
Laboratory.

\begin{table}[h!]
	\centering
	\caption{Preconditioner applications for the double shear layer test case, with third-order
	finite elements, mesh spacing $h=0.0025$, time step $\delta t = 10^{-2}$, and Re $=10$. One
	``Prec. application'' corresponds to one AIR iteration and one AMG iteration.}
	\label{tab:dsl-iters}
	\begin{tabular}{rl|cccc|ccccc}
		\toprule
		&& \multicolumn{4}{c|}{SDIRK} & \multicolumn{5}{c}{Gauss} \\
		& \multicolumn{1}{r|}{Order}  & 1 & 2 & 3 & 4 & 2 & 4 & 6 & 8 & 10\\
		\midrule
		\eqref{eq:vsf-system-1} & Prec.\ applications & 127 & 141 & 322 & 365 & 78 & 161 & 218 & 287 & 333 \\
		\midrule
		\multirow{2}{*}{\eqref{eq:vsf-system-2}} & AIR iterations & 127 & 141 & 322 & 365 & 78 & 365 & 538 & 731 & 1064\\
		& AMG iterations & 127 & 141 & 322 & 365 & 78 & 368 & 562 & 766 & 1204 \\
		\bottomrule
	\end{tabular}

	\vspace{\floatsep}

	\begin{tabular}{rl|cccc|cccc}
		\toprule
		&& \multicolumn{4}{c|}{Radau} & \multicolumn{4}{c}{Lobatto} \\
		& \multicolumn{1}{r|}{Order} & 3 & 5 & 7 & 9 & 2 & 4 & 6 & 8\\
		\midrule
		\eqref{eq:vsf-system-1} & Prec.\ applications & 237 & 232 & 299 & 490 & 217 & 234 & 311 & 375 \\
		\midrule
		\multirow{2}{*}{\eqref{eq:vsf-system-2}} & AIR iterations & 451 & 603 & 841 & 989 & 421 & 697 & 2013 & 1405 \\
		& AMG iterations & 432 & 566 & 819 & 980 & 350 & 670 & 1580 & 1227\\
		\bottomrule
	\end{tabular}
\end{table}

Note from \Cref{tab:dsl-iters} that the second and fourth order Gauss methods
are significantly more efficient than the corresponding equal-order SDIRK
methods in terms of total number of preconditioner applications, while the
\textit{10th-order} Gauss method requires approximately as many (in fact,
slightly less) preconditioner applications per time step as the fourth-order
SDIRK method. In all cases, the triangular nonlinear preconditioning
\eqref{eq:vsf-system-2} requires many more iterations than the more general
approach following the development in this paper \eqref{eq:vsf-system-1}. This
is largely because the linear preconditioning ends up being more efficient
when applied to the full system \eqref{eq:vsf-system-1}, rather than the
reordered system in \eqref{eq:vsf-system-2}. Moreover, linear iteration
counts are almost equal for nonlinear methods 1, 2, and 3 (results are not
shown for sake of space) from \Cref{sec:nonlinear:gen}, indicating that
linear conditioning theory developed in \Cref{sec:theory:gen} for systems
that arise from nonlinear methods (1) and (2) yields robust preconditioners for
method (3) as well.

\Cref{tab:dsl-iters2} demonstrates that the proposed methods are also robust
across Reynolds number, showing similar results as in \Cref{tab:dsl-iters}, for
the preconditioning in \eqref{eq:vsf-system-1} with Reynolds number 25,000.  As
before, Gauss methods require roughly half the preconditioner applications as
required by equal order SDIRK methods, while 4th-order SDIRK requires almost as many
preconditioner applications as 10th-order Gauss, and more than 8th-order Gauss and
7th-order Radau IIA.

\begin{table}[h!]
	\centering
	\caption{Preconditioner applications for the double shear layer test case, with third-order
	finite elements, mesh spacing $h=0.0025$, time step $\delta t = 10^{-2}$, and Re $=25,000$.
	One ``Prec. application'' corresponds to one AIR iteration and one AMG iteration.}
	\label{tab:dsl-iters2}
	\begin{tabular}{rl|cccc|ccccc}
		\toprule
		&& \multicolumn{4}{c|}{SDIRK} & \multicolumn{5}{c}{Gauss} \\
		& \multicolumn{1}{r|}{Order}  & 1 & 2 & 3 & 4 & 2 & 4 & 6 & 8 & 10\\
		\midrule
		\eqref{eq:vsf-system-1} & Prec.\ applications & 41 & 72 & 113 & 177 & 37 & 75 & 118 & 163 & 194 \\
		\bottomrule
	\end{tabular}

	\vspace{\floatsep}

	\begin{tabular}{rl|cccc|cccc}
		\toprule
		&& \multicolumn{4}{c|}{Radau} & \multicolumn{4}{c}{Lobatto} \\
		& \multicolumn{1}{r|}{Order} & 3 & 5 & 7 & 9 & 2 & 4 & 6 & 8\\
		\midrule
		\eqref{eq:vsf-system-1} & Prec.\ applications & 81 & 123 & 165 & 206 & 91 & 130 & 173 & 220 \\
		\bottomrule
	\end{tabular}
\end{table}

To assess the accuracy of IRK methods applied to this problem, we consider the
integration of the double shear layer problem over a longer time interval of
$[0,10]$. We choose a Reynolds number of 100, and compute a reference solution
by applying explicit 6th-order SDIRK integration with a small time step of
$\delta t = 10^{-4}$. We then apply IRK methods with large time steps of
$\delta t \in\{0.4,0.2,0.1\}$ and observe the orders of convergence in
\Cref{tab:tgv-errors}. As a consequence of the nonlinear solver tolerance
of $10^{-11}$, the observed order of convergence is reduced for the highest
order methods and the refinement $\delta t =0.2 \mapsto \delta t=0.1$.
Nevertheless, we observe that each of the methods indeed yield high-order
accuracy using very large time steps, in most cases just under their formal
order of accuracy. Moreover, the leading error constants also appear to be
small, given we can obtain accuracy on the order of $10^{-9}-10^{-10}$ with
a step size of $\delta t = 0.2$. Similar results have been observed on the
Taylor Green vortex problem; here we use the double shear layer problem to
demonstrate high-order accuracy on a problem with more interesting long-term
dynamics.

\begin{table}[h!]
	\centering
	\caption{Error and convergence rates for double shear layer problem with Re$=10$.}
	\label{tab:tgv-errors}
	\begin{tabular}{r|cccccc}
		\toprule
		& \multicolumn{2}{c}{Gauss 4} & \multicolumn{2}{c}{Gauss 6} & \multicolumn{2}{c}{Gauss 8}\\
		$\delta t$ & Error & Rate & Error & Rate & Error & Rate\\
		\midrule
		0.4 & $1.98\times 10^{-3}$ & --- & $1.59\times 10^{-4}$ & --- & $4.13\times 10^{-5}$ & --- \\
		0.2 & $1.30\times 10^{-4}$ & 3.93 & $9.08\times 10^{-7}$ & 7.45 & $1.04\times 10^{-8}$ & 11.95 \\
		0.1 & $8.15\times 10^{-6}$ & 3.99 & $9.29\times 10^{-9}$ & 6.61 & $1.05\times 10^{-10}$ & 6.63 \\
		\midrule
		& \multicolumn{2}{c}{Radau 5} & \multicolumn{2}{c}{Radau 7} & \multicolumn{2}{c}{Radau 9}\\
		\midrule
		0.4 & $2.37\times 10^{-4}$ & --- & $3.96\times 10^{-6}$ & --- & $6.21\times 10^{-8}$ & --- \\
		0.2 & $8.35\times 10^{-6}$ & 4.83 & $3.54\times 10^{-8}$ & 6.81 & $1.42\times 10^{-10}$ & 8.77  \\
		0.1 & $2.71\times 10^{-7}$ & 4.94 & $2.93\times 10^{-10}$ & 6.91 & $3.64\times 10^{-11}$ & 1.96 \\
		\midrule
		& \multicolumn{2}{c}{Lobatto 4} & \multicolumn{2}{c}{Lobatto 6} & \multicolumn{2}{c}{Lobatto 8}\\
		\midrule
		0.4 & $2.42\times 10^{-3}$ & --- & $3.93\times 10^{-5}$ & --- & $6.15\times 10^{-7}$ & --- \\
		0.2 & $1.78\times 10^{-4}$ & 3.76 & $7.23\times 10^{-7}$ & 5.77 & $2.86\times 10^{-9}$ & 7.75 \\
		0.1 & $1.18\times 10^{-5}$ & 3.92 & $1.19\times 10^{-8}$ & 5.91 & $3.62\times 10^{-11}$ & 6.30 \\
		\bottomrule
	\end{tabular}
\end{table}

\section{Conclusions}\label{sec:conc}

This paper introduces a theoretical and algorithmic framework for the fast, parallel
solution of fully implicit Runge-Kutta methods in numerical PDEs. Multiple approximate
linearizations are developed, and linear algebra theory is derived to guarantee
fast and effective block preconditioning techniques for the linearized systems,
guaranteeing a preconditioned Schur complement with condition number bounded by
a small order-one constant, and only requiring standard preconditioners
as would be used for backward Euler time integration. The new methods are shown to achieve
fast, high-order accuracy on multiple different compressible and incompressible Navier
Stokes and Euler problems. Using low-order Gauss integration schemes
with the new method consistently requires about half the preconditioner applications as
required by standard SDIRK schemes to achieve the same accuracy, demonstrating that
the new method can not only offer very high-order accuracy (along with other benefits
obtained by using fully implicit Runge-Kutta), but also improve upon state-of-the-art
low-order integration. Moreover, for the incompressible Navier Stokes double shear
layer problem in vorticity-streamfunction form, one can apply 7th to 10th order Gauss
or Radau IIA integration for a comparable number of preconditioner applications as
standard 4th-order SDIRK.

\appendix
\section{Proof}\label{appendix}

\begin{proof}[Proof of \Cref{th:cond}]
As in \cite[Th. 5]{irk1}, the square of the condition number of ${\cal P}_{\gamma}$
is given by
\begin{align}
\label{eq:kappa_def2}
\kappa^2({\cal P}_{\gamma})
=
\Vert {\cal P}_{\gamma} \Vert^2
\Vert {\cal P}_{\gamma}^{-1} \Vert^2
=
\underset{{\bm{v} \neq 0}}{\max} \frac{\Vert {\cal P}_{\gamma} \bm{v} \Vert^2 }{\Vert \bm{v} \Vert^2}
\frac{1}{\displaystyle{\underset{{\bm{v} \neq 0}}{\min} \frac{\Vert {\cal P}_{\gamma} \bm{v} \Vert^2 }{\Vert \bm{v} \Vert^2}}}.
\end{align}

First, consider bounding $\|\mathcal{P}_\gamma\|$ for $\gamma \geq \eta$. Expanding
\eqref{eq:P_gamma} yields an equivalent form
\begin{align*}
\mathcal{P}_\gamma & = \left[\eta I - \widehat{\mathcal{L}}_2 + \beta^2 (\eta I - \widehat{\mathcal{L}}_1)^{-1}\right]
	(\gamma I- \widehat{\mathcal{L}}_2)^{-1} \\
& = I - (\gamma - \eta)( \gamma I- \widehat{\mathcal{L}}_2)^{-1} +
	\beta^2( \eta I- \widehat{\mathcal{L}}_1)^{-1}
	( \gamma I -\widehat{\mathcal{L}}_2)^{-1}.
\end{align*}
Then,
\begin{align}\nonumber
\|\mathcal{P}_\gamma\|
& \leq \left\| I - (\gamma - \eta)(\gamma I -\widehat{\mathcal{L}}_2)^{-1}\right\| +
		\frac{\beta^2}{\gamma\eta}
		\left\|( I- \tfrac{1}{\eta}\widehat{\mathcal{L}}_1)^{-1} \right\|
		\left\|( I- \tfrac{1}{\gamma}\widehat{\mathcal{L}}_2)^{-1}\right\|\nonumber \\
& \leq \left\| I - (\gamma - \eta)(\gamma I -\widehat{\mathcal{L}}_2)^{-1}\right\| +
		\frac{\beta^2}{\gamma\eta}\label{eq:Pgn}.
\end{align}
The last inequality follows by noting
\begin{align*}
\sup_{\mathbf{v}\neq\mathbf{0}} \frac{\|( I- \tfrac{1}{\gamma}
	\widehat{\mathcal{L}}_2)^{-1}\mathbf{v}\|^2}{\|\mathbf{v}\|^2}
& = \sup_{\mathbf{w}\neq\mathbf{0}} \frac{\|\mathbf{w}\|^2}{\|( I- \tfrac{1}{\gamma}
	\widehat{\mathcal{L}}_2)\mathbf{w}\|^2} \\
&\hspace{-5ex}= \sup_{\mathbf{w}\neq\mathbf{0}} \frac{\|\mathbf{w}\|^2}{{\|\mathbf{w}\|^2 -
	\tfrac{2}{\gamma} \langle \widehat{\mathcal{L}}_2
	\mathbf{w},\mathbf{w}\rangle + \tfrac{1}{\gamma^2}\|\widehat{\mathcal{L}}_2\mathbf{w}\|^2}}
\leq 1,
\end{align*}
because all terms in the denominator are nonnegative. For the first term in
\eqref{eq:Pgn}, note that maximizing over $\mathbf{v}\in\mathbb{R}^n$ and
letting $\mathbf{v} \mapsto (\gamma I - \widehat{\mathcal{L}}_2)\mathbf{w}$,
\begin{align*}
\left\| I - (\gamma-\eta)(\gamma I - \widehat{\mathcal{L}}_2)^{-1}\right\|^2
& = \sup_{\mathbf{w}\neq\mathbf{0}} \frac{\| (\gamma I - \widehat{\mathcal{L}}_2 -
		(\gamma-\eta)I )\mathbf{w}\|^2}{\|(\gamma I - \widehat{\mathcal{L}}_2)
		\mathbf{w}\|^2} \\
& = \sup_{\mathbf{w}\neq\mathbf{0}} \frac{\eta^2\|\mathbf{w}\|^2
	- 2\eta\langle \widehat{\mathcal{L}}_2
		\mathbf{w},\mathbf{w}\rangle + \|\widehat{\mathcal{L}}_2\mathbf{w}\|^2}
	{\gamma^2\|\mathbf{w}\|^2 - 2\gamma \langle \widehat{\mathcal{L}}_2
		\mathbf{w},\mathbf{w}\rangle + \|\widehat{\mathcal{L}}_2\mathbf{w}\|^2}.
\end{align*}
By \Cref{ass:eig,ass:fov}, $W(\widehat{\mathcal{L}}_2)\leq 0$ and $\eta > 0$, implying
all terms in the numerator and denominator are nonnegative. Moreover, by assumption
$\gamma \geq \eta$, implying all numerator terms are bounded above by the matching
denominator terms, which yields $\| I - (\gamma-\eta)
(\gamma I - \widehat{\mathcal{L}}_2)^{-1}\| \leq 1$.
Combining with \eqref{eq:Pgn} yields
\begin{align}\label{eq:Pgamma_gen}
\|\mathcal{P}_\gamma\| \leq 1 + \frac{\beta^2}{\gamma\eta}.
\end{align}

Now consider bounding $\|\mathcal{P}_\gamma^{-1}\|$ from above. Consistent
with \eqref{eq:kappa_def2}, we do so by considering the minimum singular value,
$\|\mathcal{P}_\gamma^{-1}\|  = \frac{1}{s_{\min}(\mathcal{P}_\gamma)}$, where
$s_{\min}(\mathcal{P}_\gamma) =
\min_{\mathbf{v}\neq\mathbf{0}} \frac{\|\mathcal{P}_\gamma\mathbf{v}\|}{\|\mathbf{v}\|}.$
Letting $\mathbf{v} \mapsto (\gamma I - \widehat{\mathcal{L}}_2)
(\eta I - \widehat{\mathcal{L}}_1)\mathbf{w}$ in the ratio $\|\mathcal{P}_\gamma\mathbf{v}\|
/\|\mathbf{v}\|$, and expanding the numerator (see inner term in \eqref{eq:P_gamma}) yields
\begin{align}\nonumber
s_{\min}(\mathcal{P}_\gamma)^2
& = \min_{\mathbf{w}\neq\mathbf{0}}
	\frac{\left\| \left[ (\eta^2+\beta^2) I - \eta (\widehat{\mathcal{L}}_1 + \widehat{\mathcal{L}}_2) +
		\widehat{\mathcal{L}}_2\widehat{\mathcal{L}}_1 \right]\mathbf{w} \right\|^2}
	{\|(\gamma I- \widehat{\mathcal{L}}_2)(\eta I- \widehat{\mathcal{L}}_1)\mathbf{w}\|^2} \nonumber\\
& = \min_{\mathbf{w}\neq\mathbf{0}}
	\frac{\left\| \left[(\gamma I- \widehat{\mathcal{L}}_2)(\eta I- \widehat{\mathcal{L}}_1)
		+ (\gamma-\eta)\widehat{\mathcal{L}}_1 +
		(\eta^2+\beta^2 - \gamma\eta) I\right]\mathbf{w} \right\|^2}
	{\|(\gamma I- \widehat{\mathcal{L}}_2)(\eta I- \widehat{\mathcal{L}}_1)\mathbf{w}\|^2}.
	\nonumber
\end{align}
Here, we make the strategic choice of $\gamma$ such that the identity perturbation
$(\eta^2+\beta^2 - \gamma\eta) I = \mathbf{0}$, given by $\gamma_*
:= \tfrac{\eta^2+\beta^2}{\eta}$ \eqref{eq:gamma*}. Expanding,
{\small
\begin{align}
& \hspace{-5ex}
s_{\min}(\mathcal{P}_{\gamma_*})^2 =
	\min_{\mathbf{w}\neq\mathbf{0}}
	\frac{\left\| \left[(\gamma_* I- \widehat{\mathcal{L}}_2)(\eta I- \widehat{\mathcal{L}}_1)
		+ \frac{\beta^2}{\eta}\widehat{\mathcal{L}}_1\right]\mathbf{w} \right\|^2}
	{\|(\gamma_* I- \widehat{\mathcal{L}}_2)(\eta I- \widehat{\mathcal{L}}_1)\mathbf{w}\|^2} \nonumber\\
& = \min_{\mathbf{w}\neq\mathbf{0}} 1 +
	\frac{\beta^2}{\eta}\cdot
	\frac{\frac{\beta^2}{\eta}\left\|\widehat{\mathcal{L}}_1\mathbf{w} \right\|^2
		+ 2\left\langle((\gamma_* I- \widehat{\mathcal{L}}_2)(\eta I- \widehat{\mathcal{L}}_1)\mathbf{w},
		\widehat{\mathcal{L}}_1\mathbf{w} \right\rangle}
	{\|(\gamma_* I- \widehat{\mathcal{L}}_2)(\eta I- \widehat{\mathcal{L}}_1)\mathbf{w}\|^2} \nonumber\\
& = 1 - \frac{\beta^2}{\eta} \cdot\max_{\mathbf{w}\neq\mathbf{0}}
	\frac{-2\left\langle(\gamma_* I- \widehat{\mathcal{L}}_2)(\eta I- \widehat{\mathcal{L}}_1)\mathbf{w},
		\widehat{\mathcal{L}}_1\mathbf{w} \right\rangle-
		\frac{\beta^2}{\eta}\left\|\widehat{\mathcal{L}}_1\mathbf{w} \right\|^2}
	{\|(\gamma_* I- \widehat{\mathcal{L}}_2)(\eta I- \widehat{\mathcal{L}}_1)\mathbf{w}\|^2}.
	\label{eq:gen_smin}
\end{align}
}
Expanding the numerator in \eqref{eq:gen_smin} yields
{\small
\begin{align}\nonumber
& \hspace{-5ex}-2\left\langle(\gamma_* I- \widehat{\mathcal{L}}_2)(\eta I- \widehat{\mathcal{L}}_1)\mathbf{w},
		\widehat{\mathcal{L}}_1\mathbf{w} \right\rangle-
		\frac{\beta^2}{\eta}\left\|\widehat{\mathcal{L}}_1\mathbf{w} \right\|^2 \\
& = \left(2\gamma_* - \frac{\beta^2}{\eta}\right)
			\left\|\widehat{\mathcal{L}}_1\mathbf{w} \right\|^2
		- 2\gamma_*\eta\langle \widehat{\mathcal{L}}_1\mathbf{w},\mathbf{w}\rangle
		- 2\langle\widehat{\mathcal{L}}_2(\widehat{\mathcal{L}}_1\mathbf{w}),\widehat{\mathcal{L}}_1\mathbf{w}\rangle
		+ 2\eta\langle\widehat{\mathcal{L}}_1\mathbf{w},\widehat{\mathcal{L}}_2\mathbf{w}\rangle \nonumber\\
& = \frac{2\eta^2+\beta^2}{\eta}
			\left\|\widehat{\mathcal{L}}_1\mathbf{w} \right\|^2
		- 2(\eta^2+\beta^2)\langle \widehat{\mathcal{L}}_1\mathbf{w},\mathbf{w}\rangle
		- 2\langle\widehat{\mathcal{L}}_2(\widehat{\mathcal{L}}_1\mathbf{w}),\widehat{\mathcal{L}}_1\mathbf{w}\rangle
		+ 2\eta\langle\widehat{\mathcal{L}}_1\mathbf{w},\widehat{\mathcal{L}}_2\mathbf{w}\rangle.
		\label{eq:num_gen}
\end{align}
}
Now consider the denominator:
\begin{align}
& \hspace{-7ex}
\left\|(\gamma_* I- \widehat{\mathcal{L}}_2)(\eta I- \widehat{\mathcal{L}}_1)\mathbf{w}\right\|^2
= \left\|(\gamma_*\eta I + \widehat{\mathcal{L}}_2\widehat{\mathcal{L}}_1)\mathbf{w} -
	(\eta\widehat{\mathcal{L}}_2 + \gamma_*\widehat{\mathcal{L}}_1)\mathbf{w}\right\|^2 \nonumber\\
& = \left\|(\gamma_*\eta I + \widehat{\mathcal{L}}_2\widehat{\mathcal{L}}_1)\mathbf{w}\right\|^2
	+ \eta^2\|\widehat{\mathcal{L}}_2\mathbf{w}\|^2
	+ \gamma_*^2\|\widehat{\mathcal{L}}_1\mathbf{w}\|^2
	+ 2\gamma_*\eta\langle\widehat{\mathcal{L}}_1\mathbf{w},\widehat{\mathcal{L}}_2\mathbf{w}\rangle
	\nonumber\\ & \hspace{5ex}
	- 2\eta\Big\langle (\gamma_*\eta I + \widehat{\mathcal{L}}_2\widehat{\mathcal{L}}_1)\mathbf{w},
		\widehat{\mathcal{L}}_2\mathbf{w}\Big\rangle
	- 2\gamma_*\Big\langle (\gamma_*\eta I + \widehat{\mathcal{L}}_2\widehat{\mathcal{L}}_1)\mathbf{w},
		\widehat{\mathcal{L}}_1\mathbf{w}\Big\rangle \nonumber\\
& \geq \left\|(\gamma_*\eta I + \widehat{\mathcal{L}}_2\widehat{\mathcal{L}}_1)\mathbf{w}\right\|^2
	+ \eta^2\|\widehat{\mathcal{L}}_2\mathbf{w}\|^2
	+ \gamma_*^2\|\widehat{\mathcal{L}}_1\mathbf{w}\|^2
	+ 2\gamma_*\eta\langle\widehat{\mathcal{L}}_1\mathbf{w},\widehat{\mathcal{L}}_2\mathbf{w}\rangle
	\nonumber\\ & \hspace{5ex}
	- 2\eta\left\| (\gamma_*\eta I + \widehat{\mathcal{L}}_2\widehat{\mathcal{L}}_1)\mathbf{w}\right\|
		\left\|\widehat{\mathcal{L}}_2\mathbf{w}\right\|
	- 2\gamma_*\Big\langle (\gamma_*\eta I + \widehat{\mathcal{L}}_2\widehat{\mathcal{L}}_1)\mathbf{w},
		\widehat{\mathcal{L}}_1\mathbf{w}\Big\rangle \nonumber\\
& = \left( \left\|(\gamma_*\eta I + \widehat{\mathcal{L}}_2\widehat{\mathcal{L}}_1)\mathbf{w}\right\|
		- \eta\|\widehat{\mathcal{L}}_2\mathbf{w}\|\right)^2
	+ \gamma_*^2\|\widehat{\mathcal{L}}_1\mathbf{w}\|^2
	\nonumber\\ & \hspace{5ex}
	- 2\gamma_*\Big\langle (\gamma_*\eta I + \widehat{\mathcal{L}}_2\widehat{\mathcal{L}}_1)\mathbf{w},
		\widehat{\mathcal{L}}_1\mathbf{w}\Big\rangle
	+ 2\gamma_*\eta\langle\widehat{\mathcal{L}}_1\mathbf{w},\widehat{\mathcal{L}}_2\mathbf{w}\rangle \nonumber\\
& \geq \gamma_*^2\|\widehat{\mathcal{L}}_1\mathbf{w}\|^2
	- 2\gamma_*^2\eta\langle \widehat{\mathcal{L}}_1 \mathbf{w}, \mathbf{w}\rangle
	- 2\gamma_*\langle \widehat{\mathcal{L}}_2(\widehat{\mathcal{L}}_1\mathbf{w}),
		\widehat{\mathcal{L}}_1\mathbf{w}\rangle
	+ 2\gamma_*\eta\langle\widehat{\mathcal{L}}_1\mathbf{w},\widehat{\mathcal{L}}_2\mathbf{w}\rangle.
		\label{eq:den_gen}
\end{align}

Notice that we now have matching terms in expressions for the numerator \eqref{eq:num_gen}
and denominator \eqref{eq:den_gen}. Moreover, by assumption
$\langle\widehat{\mathcal{L}}_1\mathbf{w},\widehat{\mathcal{L}}_2\mathbf{w}\rangle\geq 0$,
and thus all terms in \eqref{eq:num_gen} and \eqref{eq:den_gen} are non-negative.
Returning to the minimum singular value defined in \eqref{eq:gen_smin} and plugging in
the numerator \eqref{eq:num_gen} and denominator bounds \eqref{eq:den_gen}, we can
bound the total ratio by considering the maximum ratio between matching numerator
and denominator terms:
{\small
\begin{align}\nonumber
&\max_{\mathbf{w}\neq\mathbf{0}}
	\frac{-2\left\langle(\gamma_* I- \widehat{\mathcal{L}}_2)(\eta I- \widehat{\mathcal{L}}_1)\mathbf{w},
		\widehat{\mathcal{L}}_1\mathbf{w} \right\rangle-
		\frac{\beta^2}{\eta}\left\|\widehat{\mathcal{L}}_1\mathbf{w} \right\|^2}
	{\|(\gamma_* I- \widehat{\mathcal{L}}_2)(\eta I- \widehat{\mathcal{L}}_1)\mathbf{w}\|^2} \\
& \leq \max_{\mathbf{w}\neq\mathbf{0}}
	\frac{\frac{2\eta^2+\beta^2}{\eta}
			\left\|\widehat{\mathcal{L}}_1\mathbf{w} \right\|^2
		- 2(\eta^2+\beta^2)\langle \widehat{\mathcal{L}}_1\mathbf{w},\mathbf{w}\rangle
		- 2\langle\widehat{\mathcal{L}}_2(\widehat{\mathcal{L}}_1\mathbf{w}),\widehat{\mathcal{L}}_1\mathbf{w}\rangle
		+ 2\eta\langle\widehat{\mathcal{L}}_1\mathbf{w},\widehat{\mathcal{L}}_2\mathbf{w}\rangle
		}
	{\gamma_*^2\|\widehat{\mathcal{L}}_1\mathbf{w}\|^2
	- 2\gamma_*^2\eta\langle \widehat{\mathcal{L}}_1 \mathbf{w}, \mathbf{w}\rangle
	- 2\gamma_*\langle \widehat{\mathcal{L}}_2(\widehat{\mathcal{L}}_1\mathbf{w}),
		\widehat{\mathcal{L}}_1\mathbf{w}\rangle
	+ 2\gamma_*\eta\langle\widehat{\mathcal{L}}_1\mathbf{w},\widehat{\mathcal{L}}_2\mathbf{w}\rangle}
	\nonumber\\
& \leq \max \left\{ \frac{\eta(2\eta^2+\beta^2)}{(\eta^2+\beta^2)^2}, \frac{\eta}{\eta^2+\beta^2},
	\frac{\eta}{\eta^2+\beta^2}, \frac{\eta}{\eta^2+\beta^2}\right\} \nonumber \\
& = \frac{\eta(2\eta^2+\beta^2)}{(\eta^2+\beta^2)^2}.\label{eq:max_bound}
\end{align}
}
Simplifying and plugging in to \eqref{eq:gen_smin} yields
\begin{align}\label{eq:smin_bound}
s_{\min}(\mathcal{P}_{\gamma_*})^2 &\geq 1 - \frac{\beta^2}{\eta} \cdot
	\frac{\eta(2\eta^2+\beta^2)}{(\eta^2+\beta^2)^2}
= \frac{\eta^4}{(\eta^2+\beta^2)^2}.
\end{align}
Applying $\|\mathcal{P}_\gamma^{-1}\|  = \frac{1}{s_{\min}(\mathcal{P}_\gamma)}$,
to \eqref{eq:smin_bound} and combining with \eqref{eq:Pgamma_gen} yields
\begin{align}
\kappa(\mathcal{P}_{\gamma_*}) = \|\mathcal{P}_{\gamma_*}\|\|\mathcal{P}_{\gamma_*}^{-1}\|
	\leq \left(1+\frac{\eta^2}{\eta^2+\beta^2}\right)\frac{\eta^2+\beta^2}{\eta^2}
	= 2+\frac{\beta^2}{\eta^2}.
\end{align}
\end{proof}

\section*{Acknowledgments}
{\small
This work was performed under the auspices of the U.S.\ Department of Energy by Lawrence Livermore National Laboratory under Contract DE-AC52-07NA27344 (LLNL-JRNL-817953).
Los Alamos National Laboratory report number LA-UR-20-30412.
This document was prepared as an account of work sponsored by an agency of the United States government.
Neither the United States government nor Lawrence Livermore National Security, LLC, nor any of their employees makes any warranty, expressed or implied, or assumes any legal liability or responsibility for the accuracy, completeness, or usefulness of any information, apparatus, product, or process disclosed, or represents that its use would not infringe privately owned rights.
Reference herein to any specific commercial product, process, or service by trade name, trademark, manufacturer, or otherwise does not necessarily constitute or imply its endorsement, recommendation, or favoring by the United States government or Lawrence Livermore National Security, LLC.
The views and opinions of authors expressed herein do not necessarily state or reflect those of the United States government or Lawrence Livermore National Security, LLC, and shall not be used for advertising or product endorsement purposes.
}

\bibliographystyle{siamplain}
\bibliography{refs2_nodoi.bib}

\end{document}